\shorttitle{FCLT for SI Process on CM Graphs} 
\numberwithin{equation}{section}
\newtheorem{myTheorem}{Theorem}
\newtheorem{MyCorollary}{Corollary}
\newtheorem{myLemma}{Lemma}
\newtheorem{myRemark}{Remark}
\newcommand{\setOfReals}{\mathbb{R}}
\newcommand{\setOfNaturals}{\mathbb{N}}
\newcommand{\setOfNonnegativeIntegers}{\mathbb{N}_0}
\newcommand{\setOfPositiveReals}{\setOfReals_{+}}
\newcommand{\setTime}[1]{ \mathcal{T}_{#1}}
\newcommand{\borel}[1]{\mathcal{B} (#1 )}
\newcommand{\BigO}[1]{\mathop{}\!O{\left(#1\right)}}
\newcommand{\smallO}[1]{\mathop{}\!o{\left(#1\right)}}
\newcommand{\optionalVariation}[1]{ \left[ #1 \right] }
\newcommand{\predictableVariation}[1]{ \left\langle #1 \right\rangle }
\newcommand{\cardinality}[1]{\mid #1  \mid}
\newcommand{\cadlag}{c\`adl\`ag\,}
\newcommand{\Cov}[1]{\mathrm{Cov}(#1)}
\newcommand{\indicator}[1]{\mathsf{1}\left(#1\right)}
\newcommand{\FOperator}[2]{\mathbb{F}_{\mathrm{#2}} \left({#1} \right) }
\newcommand{\HOperator}[2]{\mathbb{H}_{\mathrm{#2}} \left( {#1} \right) }
\newcommand{\GOperator}[2]{\mathbb{G}_{\mathrm{#2}} \left( {#1} \right) }
\newcommand{\concentration}[1]{\left[ #1 \right]}
\newcommand{\diffOperator}[2]{\mathbb{D}^{#2} #1}
\newcommand{\differential}[1]{\, \mathrm{d} #1}
\newcommand{\norm}[1]{\left\lVert#1\right\rVert}
\newcommand{\E}{\mathbb{E}}
\newcommand{\Eof}[1]{\E\left[#1\right]}
\newcommand{\NPermuteR}[2]{ (  #1)_{#2}  }
\renewcommand{\prob}{\mathbb{P}}
\newcommand{\probOf}[1]{\prob\left(#1\right)}
\newcommand{\history}[1]{\mathcal{F}_{#1}  }
\newcommand{\eqstop}{.}
\newcommand{\eqcomma}{,}
\newcommand{\defeq}{\coloneqq}
\DeclareMathOperator{\ConvInProb}{\xrightarrow[]{ \hspace*{4pt}  \mathrm{         P   } }}
\DeclareMathOperator{\ConvAlmostSure}{\xrightarrow[]{ \hspace*{4pt}  \mathrm{         a.s.   } }}
\DeclareMathOperator{\ConvInDist}{  \overset{\hspace*{4pt}  \mathcal{D}  }{\implies } }
\DeclareMathOperator*{\Bigcdot}{\bullet}
\newcommand{\myExp}[1]{\exp \bigl( #1 \bigr)  }
\newcommand{\ie}{\textit{i.e.}}
\newcommand{\eg}{\textit{e.g.}}
\newcommand{\quotes}[1]{``#1''}
\begin{document}
\title{A Functional Central Limit Theorem for SI  Processes on Configuration Model Graphs}

\authorone[University of Nottingham]{Wasiur~R.~KhudaBukhsh}
\addressone{School of Mathematical Sciences, The University of Nottingham, University Park, Nottingham, NG7 2RD, UK}

\authortwo[BHP Billiton]{Casper Woroszylo}
\addresstwo{480 Queen Street, Level 12,
	Brisbane QLD 4000 Australia
}
\authorthree[The Ohio State University]{Grzegorz A. REMPA{\L}A}
\addressthree{Mathematical Biosciences Institute,
				  The Ohio State University,
				    Jennings Hall 3rd Floor, 1735 Neil Ave.,
				      Columbus, OH 43210,
				      United States of America
}
\authorfour[Technische Universit\"at Darmstadt]{Heinz Koeppl}
\addressfour{Bioinspired Communication Systems,
		  	Technische Universit\"at Darmstadt,
		  	Rundeturmstrasse 12,
		  	64283 Darmstadt,
		  	Germany
        } 

\begin{abstract}
We study a stochastic compartmental susceptible-infected (SI) epidemic process on a configuration model (CM) random graph with a given degree distribution over a finite time interval. We split the population of graph vertices  into two compartments, namely, $\mathrm{S}$ and $\mathrm{I}$, denoting   susceptible and  infected vertices, respectively. 
In addition to the sizes of these two compartments, we  keep track of the counts of  $\mathrm{SI}$-edges (those connecting a susceptible and an infected  vertex), and $\mathrm{SS}$-edges (those connecting two susceptible vertices). We describe the dynamical process in terms of these counts and  present   a functional central limit theorem (FCLT) for them 
as the number of vertices in the random graph grows to infinity.  The FCLT  asserts  that the  counts, when appropriately scaled, converge weakly to a continuous Gaussian vector semimartingale process in 
the space of 
vector-valued \cadlag functions 
endowed with the Skorohod topology. 
We discuss applications of the FCLT in percolation theory  and in modeling spread of computer viruses. We also provide simulation results illustrating FCLT for some common degree distributions.
\end{abstract}

\keywords{SI process; Functional CLT; Configuration model; Random graphs; Scaling limit} 

\ams{60F17}{60F05; 92D30} 


\section{Introduction}\label{sec:introduction}
Large-graph scaling limits, such as the functional laws of large numbers (FLLNs) and the functional central limit theorems (FCLTs),  of dynamical processes on random graphs have received much attention 
 of late. Although dynamical processes on random graphs themselves have long been studied by mathematicians, physicists, epidemiologists, computer scientists and engineers, a comprehensive and mathematically rigorous body of work on various scaling limits  under general settings remains elusive. Such scaling limits have been derived rigorously only for a handful of special cases to date. Notable breakthroughs in the context of epidemiological processes include \cite{hakan_andersson1998,BALL2002SIR,decreusefond2012large,janson2014law,jacobsen2016large}, appearing primarily in the probability theory literature. They provide functional laws of large numbers  under various sets of technical assumptions. However,  scaling limits in the form of  functional central limit theorems   have not been well investigated to the best of our knowledge. 
 We are not aware of any rigorously derived FCLT for dynamical processes on random graphs, except for diffusion-type approximations attempted in special cases like an early-stage epidemic in \cite{House2014dynamics} and normal approximation theorems for graph statistics in configuration model random graphs \cite{barbour2019CLT}. In this paper, we provide an FCLT for a particular type of binary dynamics
on configuration model (CM) random graphs (see \cite[Chapter 7]{hofstad2017randomVol1}, \cite{bollobas1998random}) as $n$, the number of vertices in the graph, grows to infinity.

\subsection{Our Contribution}
In the current paper we study a stochastic compartmental susceptible-infected (SI) epidemic process on a configuration model random graph with a given degree distribution over a finite time interval $\setTime{0} \defeq [0,T],$ 
for some $ T>0$. In this setting, we segregate the population into two compartments, namely, $\mathrm{S}$ and $\mathrm{I}$, containing  the susceptible and the infected individuals, respectively.
 In addition to the sizes of these two compartments, we  keep track of the counts of  $\mathrm{SI}$-edges (those connecting a susceptible and an infected  individual) and $\mathrm{SS}$-edges (those connecting two susceptible individuals). We describe the dynamical process in terms of these counts and  present   a functional central limit theorem  for them as $n$ 
 grows to infinity. To be precise, let $ X_{\mathrm{SI},i} (t)$ and $X_{\mathrm{SS},i}(t)$ denote the numbers of infected  and susceptible neighbours of a susceptible vertex~$i$ at time~$t$. Based on these local processes, define
 \begin{align}
 X_{\mathrm{SS}}(t) \defeq \sum_{i \in S(t)} X_{\mathrm{SS},i}(t) \quad \text{ and } X_{\mathrm{SI}}(t) \defeq \sum_{i \in S(t)} X_{\mathrm{SI},i}(t) \eqcomma \nonumber
 \end{align}
 where $S(t)$ denotes the set of susceptible vertices at time $t$. We will denote the set of infected vertices at time $t$ by $I(t)$.  While $X_{\mathrm{SI}}$ is the count of $\mathrm{SI}$-type edges, the process $X_{\mathrm{SS}}$ counts each $\mathrm{SS}$-type edge twice.  Let
 \[ X(t) \defeq ( X_{\mathrm{S}}(t),  X_{\mathrm{SI}}(t), X_{\mathrm{SS}}(t) )\]  denote the aggregated state vector of the system at time $t \ge 0$, where 
 $X_{\mathrm{S}} (t) \defeq \cardinality{ S(t)}$ keeps track of the number of susceptible vertices. Also, let   $$X_{\mathrm{S}\Bigcdot}(t)\defeq X_{\mathrm{SI}}(t)+X_{\mathrm{SS}}(t). $$ 
 A functional law of large numbers for the SI process approximates the scaled counts  $n^{-1} X$ by the solution to a system of Ordinary Differential Equations (ODEs). That is, $n^{-1} X \approx x$ for sufficiently large $n$, where  $x$ is the solution to the FLLN limit ODE. In this paper, we prove an FCLT for the fluctuations of the scaled process $n^{-1} X$ around the FLLN limit $x$. Our main result (Theorem~2 in Section~\ref{clt})   asserts that, under appropriate technical assumptions, the fluctuation process $Y(t)\defeq \sqrt[]{n} \left( n^{-1} X(t) - x(t)  \right)$  converges weakly to a Gaussian vector semimartingale in $D^{(3)}$, the  space of 
real 3-dimensional vector-valued \cadlag functions on $\setTime{0}$  endowed with the Skorohod topology.

The technical assumptions needed for the result to hold will be made precise later in Section~\ref{sec:LLN}. A  precise statement of the FCLT  result with additional discussion and further details is presented in Section~\ref{sec:FCLT}.

\subsection{Proof Strategy and Paper Outline}
Our derivation relies on the application of an FCLT for local martingales due to Rebolledo, referred to   as the \emph{Rebolledo theorem} hereinafter. In \cite{rebolledo1980central}, Rebolledo  
 provided sufficient conditions for the convergence of local martingales to a continuous Gaussian (vector) martingale
in terms of 
the associated optional and predictable quadratic variation processes, and the martingale process containing \quotes{big} jumps of the original process. 
%
Helland later provided a simpler proof of the {Rebolledo theorem} in \cite{helland1982central}. 
 For our purpose, we do not need the Rebolledo  theorem in its full generality;  a version of it tailored to the setting of  square integrable martingales suffices and,  for the sake of completeness,  we provide the statement of such a version in Appendix~\ref{sec:rebolledo}.

The first step towards the FCLT is to perform a Doob-Meyer decomposition (\cite{meyer1962decomposition}) of the semimartingale of the vector of counts into a zero-mean martingale and a compensator.
 Then one can show that the predictable quadratic variation of the appropriately scaled martingale process 
  converges in probability to a deterministic quantity. Furthermore, in the limit, its sample paths turn out to be close to continuous  in the sense that  their ``big'' jumps vanish. Weak convergence in the sense of \cite{billingsley2013convergence} is then established by applying  the Rebolledo  theorem.

The rest of the paper is structured as follows:  The construction of the CM random graph and the epidemic process on it are described in detail in Section~\ref{sec:SIonCM}. In Section~\ref{sec:LLN}, we make our technical assumptions precise and provide a law of large numbers before presenting our FCLT (Theorem~\ref{thm:FCLT})  in Section~\ref{sec:FCLT}. Necessary technical lemmas are also discussed leading up to the FCLT.  
In Section~\ref{sec:applications}, we discuss applications of our FCLT in percolation theory (from a non-equilibrium statistical mechanics point of view),   and in computer science in the context of spread of computer viruses. In Section~\ref{sec:applications}, we also discuss the peculiar case of Poisson-type degree distributions under which the correlation equations-based mean-field pair approximation approach  correctly estimates  the limiting process.  For illustration,  we  provide simulation results for some common degree distributions.

\section{The SI process on CM random graphs}
\label{sec:SIonCM}
\subsection{Notational Conventions}

We use the notations $\setOfNaturals$ and $\setOfReals$ to denote the set of natural numbers and the set of real numbers respectively. Also, we use $\setOfNonnegativeIntegers \defeq  \setOfNaturals \cup \{0\}$ and $\setOfPositiveReals \defeq  \setOfReals \setminus (-\infty,0]$. Given $R \subseteq \setOfReals$, we denote the $\sigma$-field of {Borel} subsets of~$R$ by $\borel{R}$. 
Recall $\setTime{0} \defeq [0, T]$ for some $t>0$. We denote by $D=D(\setTime{0})$ the space of real functions on $\setTime{0}$ that are right continuous and have left-hand limits. Functions in $D$ are called \cadlag.
Unless otherwise mentioned, the space~$D$ is assumed endowed with the Skorohod topology \cite[Chapter 3]{billingsley2013convergence}, which turns $D$ into a Polish space. We call $D$ the Skorohod space. Let the triplet $(\Omega, \history{}, \prob)$ denote our probability space. For an event~$A$, we use $\indicator{A}$ to denote the indicator (or characteristic) function of $A$. We shall use the following shorthand notation $\NPermuteR{a}{b}= a(a-1)(a-2)\cdots(a-b+1)$ for $a>b$ and $a,b \in \setOfNaturals$. The symbols $\BigO{.} $ and $\smallO{.}$ are the big~O and small~o notations respectively, and they carry their usual meanings. For a differentiable function $f   $ defined on some set $E \subseteq \setOfReals^d$, we denote its partial derivative with respect to the $i$-th variable by   $\partial_i f$, for $i= 1,2, \ldots, d$. With some abuse of notation, we use $\partial f(x)$ to denote the derivative of a differentiable function of a single variable at~$x$. 
For a stochastic process $Z(t)$ with paths in $D$, we denote the associated  jump  process $Z(t)-Z(t-)$ by $\delta Z(t)$.  
We also denote  $\setTime{} \defeq (0,T] \subset \setTime{0}$.

\begin{figure}
	\centering
	\includegraphics[width=\textwidth]{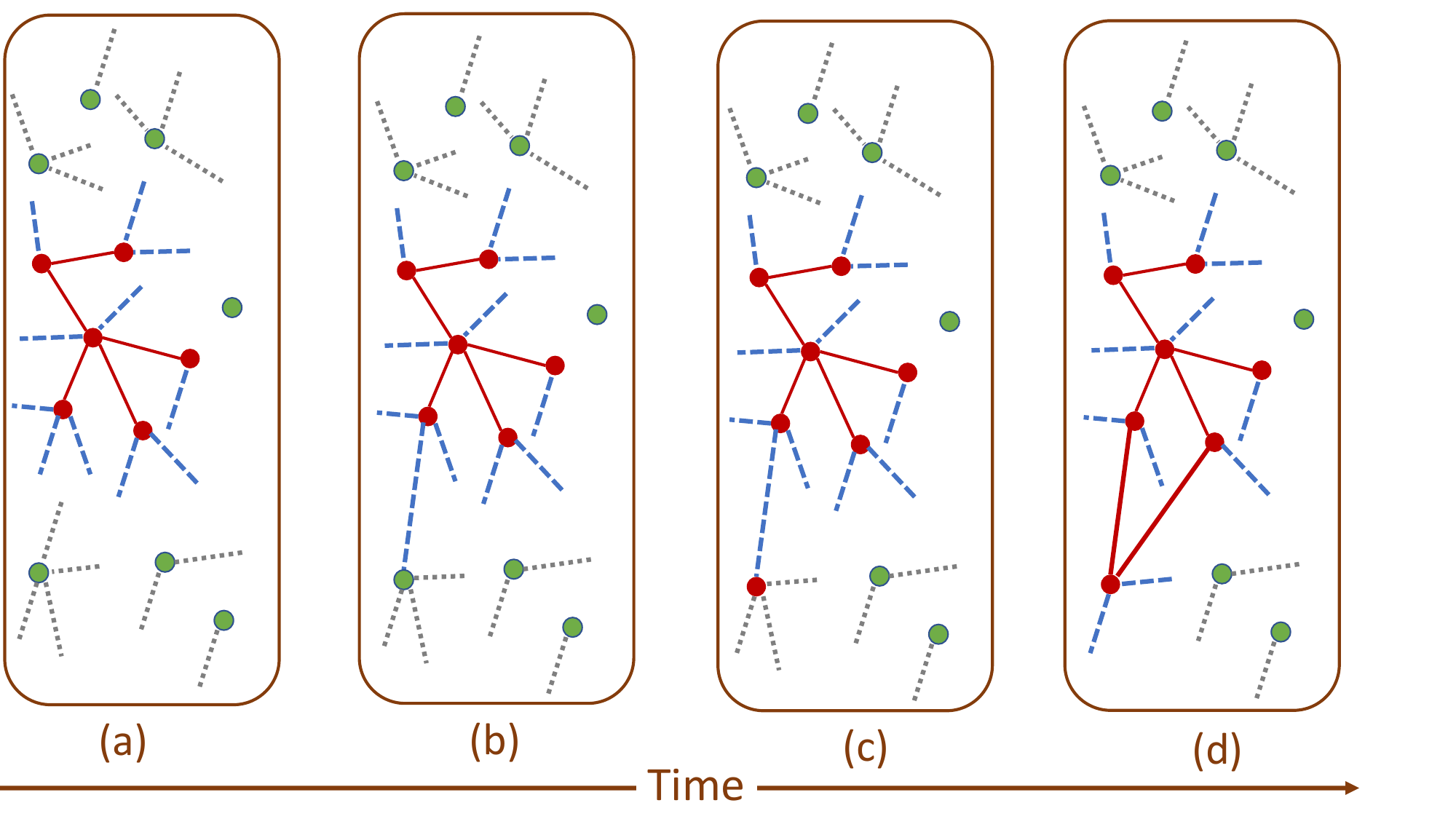}
	\caption{\label{fig:percolation}%
	\small Dynamics of the stochastic SI model over a finite time interval $\setTime{0}$. Susceptible vertices are shown in yellowish green and infected vertices, in dark red. Different types of edges  are  shown in different colours. \textbf{(a)} State of the system at a given time. We know the degree of each vertex. For each infected vertex, we know its number of infected neighbours (number of $\textrm{II}$-type edges, which are shown in solid red) and the number of $\textrm{IS}$-type half-edges, which are shown in dashed blue. The green dotted half-edges emanating from the susceptible vertices are of undetermined types. \textbf{(b)} An $\textrm{IS}$-type half-edge rings and is randomly matched with a half-edge originating from a susceptible vertex. A full edge is thus formed. \textbf{(c)} We change the status of the chosen susceptible vertex to infected (and change its colour to dark red). \textbf{(d)} We determine the types of the remaining half-edges of the newly infected vertex.
	}
\end{figure}


\subsection{Model}
\label{subsec:model}

We begin with the class of all  configuration model random graphs  \cite[Chapter 7]{hofstad2017randomVol1} with $n$ vertices, for  $n \in \setOfNaturals$. The main advantage of the configuration model is that it allows one to fix the degrees before constructing the graph itself. There are numerous real life situations where random graphs with a prescribed degree sequence (or a probability distribution for the degree sequence) are reasonable and intuitive. See \cite[Chapter 7]{hofstad2017randomVol1} for some examples.

Given degrees $d_1,d_2, \ldots, d_n$ for $n$ vertices, we first assign $d_i$ half-edges to vertex~$i$. The configuration model random graph is then obtained by uniformly-at-random matching (or pairing) of all  available half-edges. Two paired half-edges form an edge.  We assume the degrees $d_1, d_2, \ldots, d_n$ are drawn from a probability distribution, which we call the degree distribution. If the sum of the drawn degrees is not even, we add a parity half-edge to the last vertex. That is, we first draw $d_1, d_2, \ldots, d_{n-1}$ and $d_{n}^{'}$ as an independent and identically distributed (i.i.d.) random sample from the degree distribution and then set $d_n \defeq d_{n}^{'} + \indicator{d_{n}^{'} + \sum_{j=1}^{n-1} d_j \text{ is odd} }$. However, for large $n$  the contribution of the added indicator function  is negligible with probability one (see, e.g.,  \cite[Section 7.6, pp. 239]{hofstad2017randomVol1}),  and we will  thus ignore it  in  our asymptotic calculations.    

Additionally, we assume the first three moments of the degree distribution are finite. Note that, by matching half-edges uniformly at random,  we obtain  a \emph{multigraph} because the resulting graph may have self-loops and multiple edges. Interestingly, when the degrees $d_1, d_2, \ldots, d_n$ are drawn independently, \cite[Theorem 3.1.2]{durrett2007randomGraphDynamics} states that, as the number of vertices grows to infinity, the numbers of self-loops and parallel edges have independent Poisson limits whose means depend only on the first two moments of the degree distribution. Since the focus of this paper is on the functional limit of the fluctuations of the scaled stochastic process $n^{-1}X$, the contributions of self-loops and parallel edges are negligible in the limit. Therefore, we will ignore self-loops and parallels edges in our calculations of the asymptotic terms.  

Let us denote  the probability generating function (PGF) of the underlying degree distribution by $\psi$, \ie,
\begin{equation}
\psi(x) \defeq \sum_{k} x^k p_k \eqcomma
\label{eq:pgf}
\end{equation}
where $p_k$ is the probability that a randomly chosen vertex has degree~$k$, for $k \in \setOfNonnegativeIntegers$. We denote the class of all CM random graphs with $n$ vertices by $\mathcal{G}(\psi,n)$. 

A half-edge is referred to as $\mathrm{SI}$-type  (resp., $\mathrm{IS}$-type) if it  originates at a  susceptible (resp., infected) vertex and is a part of an $\mathrm{SI}$-type   edge.  Similarly, an $\mathrm{SS}$-type half-edge is a part of an $\mathrm{SS}$-type edge and an $\mathrm{II}$-type half-edge is a part of an $\mathrm{II}$-type edge (connecting two infected individuals).   


We consider the Markovian susceptible-infected model on CM random graphs. Each infected individual (represented by a vertex of the graph) infects one of its neighbours at rate~$\beta >0$, independently of the other neighbours. We split the population into two compartments  consisting respectively of the susceptible and the infected individuals.  In order to prove the  FCLT result outlined   in the introduction, we need to approximate the probability distributions of the counts $X_{\mathrm{S}}, X_{\mathrm{SI}}$ and $X_{\mathrm{SS}}$, which  requires  us to calculate the  probability that the susceptible vertex of degree $k$ has $l$ infected neighbours, for $l\le k$.  As we explain below, obtaining  this probability is particularly easy if we allow the graph to be constructed dynamically as the infection spreads. Consequently, we adopt the following dynamic construction of the graph first proposed in \cite{decreusefond2012large}.  

\paragraph{Dynamic construction of the graph}
  Suppose  we are given $n$ vertices with degrees (or number of half-edges) $d_1, d_2, \ldots, d_n$.   A  subset of size $X_{\mathrm{S}}(0)$ of $n$ vertices is chosen uniformly at random. Those vertices are  designated as   susceptible,  and   the remaining vertices,  infected.   Note that since we can write 
  $$X_{\mathrm{SI}}(t) = \sum_{i\in I(t)} X_{\mathrm{IS},i}(t) \eqcomma  $$ where  $X_{\mathrm{IS},i}(t)$ denotes the number of $\mathrm{IS}$-type half-edges originating at  an infected vertex $i$ at time $t$,  it suffices to follow the infected vertices in the dynamic construction. 

  We start at time $t=0$ by  infecting  a random subset of vertices   and revealing their  connections by matching their half-edges uniformly at random with the available half-edges (see below). 
  We then associate with each  $\mathrm{IS}$-type half-edge  an independent exponential clock with parameter $\beta$.   The first of all these clocks that rings determines the next event. Therefore,  at time $t$ we know that  the next infection  will take place in a time exponentially distributed with parameter $\beta X_{\mathrm{SI}}(t)$.  Let $t^\ast>t$ denote the time of the next infection  with its   left limit denoted by $t^\ast\!\!-$.  At $t^\ast$, we update the state of the epidemic as follows. 
  
  \begin{enumerate}
  \item[Step 1.]  We randomly match the $\mathrm{IS}$-type half-edge that has rung to a half-edge originating  from  a susceptible. This susceptible is the newly infected with degree, say, $k$.
  \item[Step 2.]  We choose uniformly $k - 1$ half-edges  among all the available half- edges (they either are of type $\mathrm{IS}$ or $\mathrm{SS}$). Let $n_{\mathrm{IS}}$ (resp., $n_{\mathrm{SS}}$) be the number of $\mathrm{IS}$-type (resp., of $\mathrm{SS}$-type) half-edges drawn among these $k - 1$ half-edges. Note that the probability of drawing a  specific pair $(n_{\mathrm{IS}},n_{\mathrm{SS}})$ is given by the hypergeometric distribution  $$\frac{\binom{X_{\mathrm{SI}}(t^\ast\!\!-)}{n_{\mathrm{IS}}}\binom{X_{\mathrm{SS}}(t^\ast\!\!-)}{n_{\mathrm{SS}}} }{\binom{X_{\mathrm{S}\Bigcdot}(t^\ast\!\!-)}{k-1}}.$$ 
The chosen $n_{\mathrm{IS}}$ half-edges of type $\mathrm{IS}$  determine the infected neighbors of the newly infected individual. The remaining $n_{\mathrm{SS}}$ edges of type $\mathrm{SS}$ remain open in the sense that the susceptible neighbor is not fixed.

\item[Step 3.]  We change the status of the $n_{\mathrm{IS}}$ (resp., $n_{\mathrm{SS}}$) $\mathrm{IS}$-type (resp., $\mathrm{SS}$-type) edges created to $\mathrm{II}$-type (resp., $\mathrm{SI}$-type).

\item [Step 4.] We change the status of the newly infected from $\mathrm{S}$ to $\mathrm{I}$ and wait for the next clock to ring at a new time $t^{\ast\ast}>t^\ast$ to repeat  Steps 1-4. The process stops when there are no more $\mathrm{IS}$-type  half-edges. 
\end{enumerate}

The above algorithm (see Figure~\ref{fig:percolation} for a pictorial representation) gives rise to the natural filtration $\history{t}$
as the  $\sigma$-field generated by the process history up to and including time~$t>0$ \cite[Chapter 1, p.~14]{durrett2010probability}.  The filtration $\history{t}$ includes information about the status of all vertices  (susceptible or infected), and all paired and unpaired half-edges.    
To be precise, we define 
\begin{align}
	\history{t} \defeq \sigma\left(\{  S(s), I(s), \{X_{\mathrm{IS},i}, X_{\mathrm{II},i}\}_{i \in I(s)}  \mid s \le t \}\right)  \eqstop 
	\nonumber 
\end{align}
We let $\history{0}$  contain \emph{all} $\prob$-null sets in $\history{}$. We include all $\prob$-null sets in $\history{0}$ so that
the filtration family $\{ \history{t} \}$ is complete. It  is also right continuous (\ie, $\history{t+}=\history{t}$ for every $t\geq 0$, where
$\history{t+} \defeq \cap_{s >0 } \history{ t+s}$
	is the \emph{$\sigma$-field of events immediately after~$t$}),   because it is generated by a right continuous jump process \cite[Chapter II, p.~61]{andersen1997statistical}. 
Therefore, the usual Dellacherie's conditions on 
$\{\history{t} \}$ are satisfied \cite{rebolledo1980central,shreve1998stochCalculus,fleming2005countingprocess}. We denote the $\sigma$-field of events strictly prior to $t \in \setOfPositiveReals$ by $\history{t-}$ and write $\history{0-} \defeq \history{0}$, by convention.

Denote the collection of vertices of degree $k$ that remain susceptible  at time $t$ by $S_k(t)$. Recall that $X(t) \defeq ( X_{\mathrm{S}}(t),  X_{\mathrm{SI}}(t), X_{\mathrm{SS}}(t) )$  denotes the aggregated state vector of the system at time $t \ge 0$. Note that,  since we may calculate  $X_{\mathrm{SS}}(t)$ if we know  $X_{\mathrm{SI}}(t)$ and $X_{\mathrm{II}}(t)$, the process $X$  is adapted to  $\history{t}$. However,  the process  $X$ itself is not necessarily Markovian.  Let $\tilde{X} \defeq  (X_{\mathrm{SI},i}, X_{\mathrm{SS},i})_{i \in S} $.
By the \emph{Doob-Meyer decomposition theorem} \cite{meyer1962decomposition}, we decompose  $X$ as
\begin{align}
X(t) = {} & X(0)+ \int_{0}^{t} \FOperator{\tilde{X}(s) }{ X } \differential{s} + M'(t) \eqcomma
\label{eq:doob-decomposition}
\end{align}
where 
$M'(t) \defeq ( M'_{\mathrm{S}}(t),  M'_{\mathrm{SI}}(t), M'_{\mathrm{SS}}(t)) $  is a zero-mean martingale adapted to the filtration $\history{t}$
%
and 
$\FOperator{\tilde{X}}{X} \defeq (\FOperator{\tilde{X} }{\mathrm{S}}, \FOperator{\tilde{X}}{\mathrm{SI}}, \FOperator{\tilde{X}}{\mathrm{SS}} )$ is an integrable function given by
\begin{align}
\begin{aligned}
\FOperator{\tilde{x}}{\mathrm{S}} & \defeq  -\beta x_{\mathrm{SI}}  \eqcomma \\
\FOperator{\tilde{x}  }{\mathrm{SI}} & \defeq   \sum_{i \in S}  \beta x_{\mathrm{SI},i} (x_{\mathrm{SS},i}- x_{\mathrm{SI},i})  \eqcomma \\
\FOperator{\tilde{x} }{\mathrm{SS}} & \defeq   -2 \sum_{i \in S} \beta x_{\mathrm{SI},i} x_{\mathrm{SS},i}  \eqcomma 
\end{aligned}
\end{align}
for $\tilde{x}\defeq  (x_{\mathrm{SI},i}, x_{\mathrm{SS},i})_{i \in S}$ and $x_{\mathrm{SI}} \defeq \sum_{i \in S} x_{\mathrm{SI},i}, x_{\mathrm{SS}} \defeq \sum_{i \in S} x_{\mathrm{SS},i}$. 
The first equation follows from the fact that the number of susceptible vertices decreases by $1$ at rate $\beta X_{\mathrm{SI}}$ because of the Markovian assumption of the SI process and the independence of the infection transmissions along the edges. Similarly, when a susceptible vertex $i$ gets infected, which happens at rate $\beta X_{\mathrm{SI},i}$ (again due to the independence of infection transmissions along each of its $X_{\mathrm{SI},i}$ infectious edges), the $\mathrm{SI}$ edges connected to that susceptible vertex turn into  $\mathrm{II}$-type edges whereas its $\mathrm{SS}$-type  edges turn into $\mathrm{SI}$-type edges. Therefore, the net change to the total count of SI-edges is $(X_{\mathrm{SS},i} -  X_{\mathrm{SI},i})$ whenever a susceptible vertex $i$ gets infected. Because of the independence assumption, we sum over all susceptible vertices to arrive at the second equation. See Figure~\ref{fig:edgeCountChange} for a visualization. The  third equation can be explained similarly by computing the rates and the corresponding net changes to the total count of SS-edges when a susceptible vertex gets infected.

We note that as a consequence of the uniformly-at-random matching of half edges in the dynamic construction,  similarly to Step~2,  we obtain also the following hypergeometric distribution (see also \cite{jacobsen2016large}) for the number of infected and susceptible neighbours of a susceptible vertex of degree~$k$ at time $t$.

\begin{myLemma}
	For $k \in \setOfNaturals_0$ and $i \in S_k(t)$, conditionally on the process history until time $t$, the vector $(X_{\mathrm{SI},i}(t), X_{\mathrm{SS},i}(t))$ follows the  hypergeometric distribution 
	\begin{align}\label{eq:susceptibleNbd}
\probOf{   X_{\mathrm{SI},i}(t)= n_{\mathrm{SI}}, X_{\mathrm{SS},i}(t)= n_{\mathrm{SS}}   \mid \history{t} } ={}&  \frac{  \binom{X_{\mathrm{SI}}(t)}{n_{\mathrm{SI}}}   \binom{ X_{\mathrm{SS}}(t) }{n_{\mathrm{SS}} } }{   \binom{X_{\mathrm{S} \Bigcdot }(t) }{k} } \eqcomma
	\end{align}
	supported on $n_{\mathrm{SI}}+ n_{\mathrm{SS}}= k$ where $n_{\mathrm{SI}},n_{\mathrm{SS}} \in \setOfNonnegativeIntegers$.

		\label{remark:susceptibleNbd}
\end{myLemma}

\begin{proof} Consider  $X_\mathrm{SI}(t)$,  the number of  half-edges of type $\mathrm{IS}$ emanating from the set of infected vertices. By construction,  they   all connect to the susceptible vertices uniformly at random.   Thus the  probability that $n_{\mathrm{SI}}$ of them connect to a given vertex  $i\in S_k$  when a total of $X_{\mathrm{S} \Bigcdot }(t)$ half-edges are available to choose  from is 
$$ \frac{\binom{k} {n_\mathrm{SI}} \binom{X_{\mathrm{S} \Bigcdot}(t)-k }{{X_\mathrm{SI}(t)}-n_\mathrm{SI}}}{\binom{X_{\mathrm{S} \Bigcdot}(t)}{X_\mathrm{SI}(t)}}$$ 
which works out to be  the same as  the hypergeometric probability \eqref{eq:susceptibleNbd}.  
\end{proof}
%
%

\begin{figure}
\centering
\includegraphics[width=0.5\columnwidth]{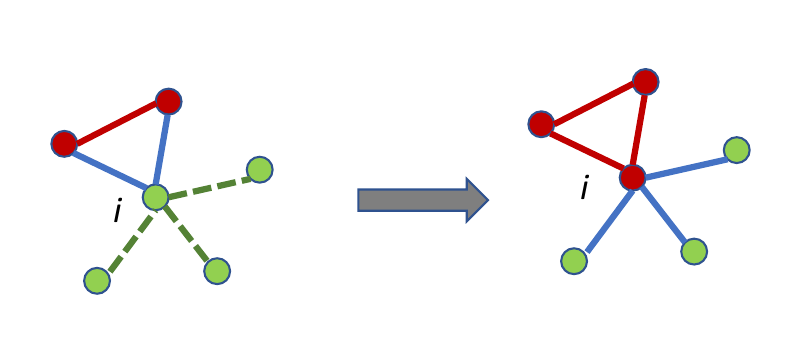}
  \caption{\label{fig:edgeCountChange}%
  \small Change to the edge counts. When a susceptible vertex $i$ gets infected (turns red from green; see Step 2 of the dynamic construction), the $\mathrm{SI}$ edges attached to that vertex turns into $\mathrm{II}$ edges whereas $\mathrm{SS}$ edges become new $\mathrm{SI}$ edges. This visualization is helpful in understanding the Doob--Meyer decomposition in Equation~\eqref{eq:doob-decomposition}.}
\end{figure}


Since we partition the collection of susceptible vertices $S(t)$ by their degree~$k \in \setOfNonnegativeIntegers$, we have $S(t) = \cup_k S_k(t)$. Therefore, $X_{\mathrm{S}}(t) = \sum_k X_{S_k}(t)$, where $X_{S_k}(t)$ is the size of $S_k(t)$. Note that $X_{\mathrm{S}\Bigcdot }(t) =  \sum_{k} k X_{S_k}(t)$.
To study the large graph limit of the system, we also define the following quantity
\begin{align}
\theta(t) \defeq {}& \myExp{ -\beta \int_{0}^{t} \frac{X_{\mathrm{SI}}(s)}{X_{\mathrm{S}\Bigcdot }(s)} \differential{s} } \eqcomma
\label{eq:theta_defn}
\end{align}
which can be intuitively described as the probability that a degree-$1$ vertex that was susceptible at time zero remains susceptible till time~$t >0$ (\cite{Volz2008,Miller2011note,Miller2012edge}). It may be described equivalently as
\begin{align}
\theta(t) ={} & \theta(0) + \int_{0}^{t} \FOperator{X_{\mathrm{SI}}(s),X_{\mathrm{S}\Bigcdot }(s), \theta(s) }{0} \differential{s} \eqcomma  \nonumber
\end{align}
where $\theta(0) =1$ and $\FOperator{X_{\mathrm{SI}},X_{\mathrm{S}\Bigcdot }, \theta}{0} \defeq - \beta \theta \dfrac{X_{\mathrm{SI}}}{X_{\mathrm{S}\Bigcdot }}$.

\section{The Law of Large Numbers}
\label{sec:LLN}

As done in \cite{jacobsen2016large}, we  make the following technical assumptions. Unless otherwise stated,  all limits below and elsewhere in the paper are taken in the large graph limit, \ie, as $n\to \infty$.  
\begin{enumerate}[label=\color{tud3d}\textbf{A\arabic*}]
	\item \label{itm:as1} There exists a positive constant $c_{\mathrm{S}\Bigcdot}$ such that $\probOf{ n^{-1} X_{\mathrm{S}\Bigcdot }(T) > c_{\mathrm{S}\Bigcdot} } \to 1$. 
	\item \label{itm:as2}  The fraction of initially susceptible 
	vertices is non-random and converges to some $\alpha_{\mathrm{S}}$, 
	\ie,
	\begin{align}
	n^{-1} X_{\mathrm{S}}(0) & \rightarrow  \alpha_{\mathrm{S}} \eqstop 
	\end{align}
	We also assume that the initially infected and susceptible vertices are selected uniformly at random and $0<\alpha_{\mathrm{S}} <1$. 
	\item \label{itm:as3} The degree distribution $\{p_k\}$ satisfies	$\sum_k k^3 p_k  < \infty$.
\end{enumerate}

%
We note that by virtue of the uniformly-at-random selection of the infected vertices at time zero,  \ref{itm:as2} also implies (see \cite{jacobsen2016large}) 
	\begin{align}
		\label{eq:alpha_defns}
	\begin{aligned}
	n^{-1} X_{\mathrm{I}}(0) & \rightarrow \alpha_{\mathrm{I}} = 1-  \alpha_{\mathrm{S}} >0  \eqcomma \\
	n^{-1} X_{\mathrm{SI}}(0) & \rightarrow  \alpha_{\mathrm{SI}}= \alpha_{\mathrm{S}} (1-  \alpha_{\mathrm{S}}) \partial \psi(1) \eqcomma \\
	n^{-1} X_{\mathrm{SS}}(0) & \rightarrow  \alpha_{\mathrm{SS}}= \alpha_{\mathrm{S}}^2 \partial \psi(1) \eqstop
	\end{aligned}
	\end{align}
	Below, for simplicity,  we use the vector notation $\alpha := (\alpha_{\mathrm{S}} , \alpha_{\mathrm{SI}},  \alpha_{\mathrm{SS}})$. The process $X_{\mathrm{I}} $ captures the number of infected individuals. Although it appears that  the assumption \ref{itm:as1} may be unnecessary,  we retain it for the sake of  consistency  with the work \cite{jacobsen2016large} since in our FCLT derivations  we rely on  some of the results obtained there. 


We note that  simulating the stochastic system satisfying \ref{itm:as1}, \ref{itm:as2}, and \ref{itm:as3} is straightforward with the help of the  dynamic construction described above. Indeed, for a given choice of the degree distribution satisfying \ref{itm:as3} with mean degree $\partial \psi(1) $, we may draw degrees $d_1, d_2, \ldots,  d_n$ and assign $d_i$ half-edges to vertex $i$. Then  we initialize the number of infected and susceptible nodes as in the dynamic graph construction above,  assigning  the initial proportions according  to $\alpha$. 
 We then  run the epidemic process following the dynamic construction up until a finite time $T$, which is independent of $n$, when there are still susceptible vertices left. This simulation algorithm  also constructs the stochastic process $X$ satisfying \ref{itm:as1}, \ref{itm:as2}, and \ref{itm:as3}.

For $f : \setOfReals \rightarrow \setOfReals$ and  $r \in \setOfNaturals $, the quantity $f^r$ is understood as $(r-1)$-times multiplication of $f$ with itself for $r \in \setOfNonnegativeIntegers$ with the convention $f^0 \defeq 1$. The symbol $\partial ^r f$  denotes the $r$-th derivative of the function~$f$, and by convention, we write $\partial f \defeq \partial ^1 f$.  Now, define 
the operator $\diffOperator{f}{r}$   as
\begin{align}
\diffOperator{f}{r} \defeq   f^{r-1}  \frac{ \partial ^r f}{ (\partial f  )^r } \eqcomma
\label{eq:diffOperatorDefn}
\end{align}
for $f : \setOfReals \rightarrow \setOfReals$
and  $r \in \setOfNaturals $ whenever the division of $\partial ^r f$ by $(\partial f  )^r$  is permissible, \ie, $\partial f   \neq 0$.  The operator $\diffOperator{}{r}$ is used to capture the impact of the graph structure on the limiting dynamics through the degree distribution.  Let 
$x \defeq  (x_{\mathrm{S}},  x_{\mathrm{SI}}, x_{\mathrm{SS}})$, and $\theta : \setOfPositiveReals \to [0,1]$ be a function. Then define $\kappa(\vartheta)$ as
\begin{align}
   \kappa(\vartheta) \defeq  \diffOperator{\psi \circ \vartheta }{2} \eqstop \label{eq:kappa_defn}
\end{align}
Note that 
\begin{align}
\kappa(\vartheta)(x) = {}& \frac{ \psi(\vartheta (x))   \partial^2 (\psi\circ \vartheta)(x) )  }{(\partial (\psi\circ\vartheta) (x))^2} \eqcomma x \in \setOfReals \eqstop  
\end{align}

If $(x,\vartheta)$ is the large graph FLLN limit of $(n^{-1} X, \theta)$, then,
following \cite[Section~3.3.3]{jacobsen2016large}, we interpret $\kappa(\vartheta)$ as the limiting ratio of the average excess degree of a susceptible vertex chosen randomly as a neighbour of an infectious individual, to the average degree of a susceptible vertex $\mu_{\mathrm{S}}$.  The quantity~$\kappa(\vartheta)$ allows us to count various pairs accurately.   In general, the operator $\diffOperator{\psi\circ \vartheta  }{r+1} $  recursively compares the excess degree of a susceptible vertex randomly chosen as a neighbour of $r$ infected individuals with that of a randomly chosen susceptible vertex. Therefore, it allows us to  count various $r$-configurations (different subgraph structures on $r$ vertices, \eg,   triples, quadruples etc.) accurately in the limit.   To be precise, in Lemma~\ref{thm:D_recurrence} in  Appendix~\ref{sec:DOperator},  we explicitly show
\begin{align}
	 \diffOperator{\psi\circ \theta  }{r+1} =  \frac{ \mu_{\mathrm{S}}^{(r)}   (\theta)  }{  \mu_{\mathrm{S}}  (\theta)  }  \diffOperator{\psi\circ \theta  }{r}  \ConvInProb  \diffOperator{\psi\circ \vartheta  }{r+1}    \eqcomma
	 \label{eq:D_recurrence}
\end{align}
where $ \mu_{\mathrm{S}}^{(r)}$ is the average excess degree of a  susceptible vertex randomly chosen as a neighbour of $r$ infected individuals. In Appendix~\ref{sec:DOperator},  we calculate these quantities explicitly.

Let us also   define  the operator $\HOperator{x,\vartheta}{} \defeq (\HOperator{x, \vartheta}{x }, \HOperator{x, \vartheta}{0} )$,  where $\HOperator{x,\vartheta}{x} \defeq
(\HOperator{x, \vartheta}{\mathrm{S}},\HOperator{x, \vartheta }{\mathrm{SI}},\HOperator{x, \vartheta}{\mathrm{SS}} )$, and $ \HOperator{x, \vartheta}{0} $ are given by
\begin{equation}
\begin{aligned}
\HOperator{x, \vartheta}{S} & \defeq  -\beta x_{\mathrm{SI}}   \eqcomma \\
\HOperator{x, \vartheta}{SI} & \defeq   \beta   \kappa(\vartheta) \frac{x_{\mathrm{SI}}}{x_S} (x_{\mathrm{SS}}- x_{\mathrm{SI}}) - \beta  x_{\mathrm{SI}}   \eqcomma \\
\HOperator{x, \vartheta }{SS} & \defeq  -2 \beta \kappa(\vartheta)  \frac{ x_{\mathrm{SI}} x_{\mathrm{SS}} }{x_S}  \eqcomma \\
\HOperator{x, \vartheta}{0} & \defeq  -\beta\frac{x_{\mathrm{SI}}}{\alpha_{\mathrm{S}} \partial \psi(\vartheta)}  \eqstop
\end{aligned}
\end{equation}

Now, noting that \ref{itm:as1} implies $P(X_{\mathrm{S}\Bigcdot} >k)\to 1 $ for any $k>0$ and \ref{itm:as3}  implies $\sum_k k^2 p_k < \infty$, recall the strong law on large graphs due to 
\cite{jacobsen2016large}. In the following we take the norm $\norm{.}$ to be uniform norm.

\begin{myTheorem}
	Assume \ref{itm:as1}, \ref{itm:as2}, and  \ref{itm:as3}
for a configuration model graph $\mathcal{G}(\psi, n)$. Then, for any $ T >0 $, the following holds
	\begin{equation*}
	\sup_{ 0 <t \le T } \norm{  ( X(t)/n, \theta(t))  -  (x(t), \vartheta(t))     } \ConvInProb 0 \eqcomma
	\end{equation*}
	where $(x, \vartheta) \defeq ( (x_{\mathrm{S}},  x_{\mathrm{SI}}, x_{\mathrm{SS}}), \vartheta )$ is the solution of
	\begin{equation}
		\label{eq:SIFCLT:LLN_ODE}
	(x (t) , \vartheta(t)  )= (x(0) , \vartheta(0))  + \int_{0}^{t} \HOperator{x(s) , \vartheta(s) }{}  \differential{s} \eqcomma
	\end{equation}
	with  the initial condition $x(0) = \alpha $ and $\vartheta(0) =1$.
	\label{thm:LLN}
\end{myTheorem}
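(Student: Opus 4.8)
The plan is to prove the law of large numbers by the classical semimartingale route: combine the Doob--Meyer decomposition \eqref{eq:doob-decomposition} with a Gronwall estimate, the decisive point being that the compensator of the rescaled state vector converges to the vector field $\mathbb{H}$. First, set $x^{(n)}(t) \defeq X(t)/n$, so that \eqref{eq:doob-decomposition} reads
\[
x^{(n)}(t) = x^{(n)}(0) + \int_{0}^{t} \tfrac{1}{n}\,\FOperator{X(s)}{X}\,\differential{s} + \tfrac{1}{n}M'(t),
\]
with a parallel, purely finite-variation identity for $\theta$. The martingale term is removed first: under \ref{itm:as3} the maximal degree is $\smallO{\sqrt{n}}$ w.h.p., so each infection event perturbs every coordinate of $X$ by $\smallO{\sqrt{n}}$, while the aggregate jump rate is $\BigO{n}$; hence $\predictableVariation{M'}(T) = \smallO{n^{2}}$ and Doob's maximal inequality yields $\sup_{t \le T}\norm{\tfrac{1}{n}M'(t)} \ConvInProb 0$.

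The heart of the proof is to show that, uniformly in $s \in [0,T]$,
\[
\tfrac{1}{n}\FOperator{X(s)}{X} = \HOperator{x^{(n)}(s),\theta(s)}{x} + \smallO{1},
\]
and that the drift of $\theta$ equals $-\beta\,\tfrac{x^{(n)}_{SI}(s)}{\alpha_S\,\partial\psi(\theta(s))} + \smallO{1}$. The first coordinate $\FOperator{X_{SI}}{S} = -\beta X_{SI}$ is exact; the content is in the quadratic coordinates, which reduce to the concentration estimates
\[
\tfrac{1}{n}\sum_{i \in S} X_{SI,i}X_{SS,i} \ConvInProb \kappa(\vartheta)\,\frac{x_{SI}x_{SS}}{x_S}, \qquad \tfrac{1}{n}\sum_{i \in S} X_{SI,i}^{2} \ConvInProb \kappa(\vartheta)\,\frac{x_{SI}^{2}}{x_S},
\]
together with the identification of sums of the form $\tfrac{1}{n}\sum_k k X_{S_k}$ in terms of $\partial\psi(\vartheta)$. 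This is where the configuration-model structure enters: conditionally on $\history{s}$, the partner of a newly exposed susceptible half-edge is uniform among the unpaired half-edges, so a size-biasing computation shows that in the limit the degree of a susceptible node reached along an edge has generating function proportional to $\partial\psi(\vartheta x)$, with normalising constant $\psi(\vartheta)\partial^{2}\psi(\vartheta)/(\partial\psi(\vartheta))^{2} = \kappa(\vartheta) = \diffOperator{\psi(\vartheta)}{2}$, precisely the ratio of limiting mean excess degree to limiting mean degree of a susceptible node.

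I expect the bookkeeping here --- decomposing $X_{SI,i}$ and $X_{SS,i}$ across the degree classes $S_k$ and controlling the dependence among half-edge pairings --- to be the main obstacle; it is handled by an $L^{2}$ second-moment estimate exploiting that pairwise correlations of the uniform matching are $\BigO{1/n}$, together with \ref{itm:as1} to keep $X_{S\Bigcdot}$ of order $n$. Finally, \ref{itm:as1} also ensures that along the relevant trajectories $x_S$ and $\partial\psi(\theta)$ stay bounded away from zero, so $\mathbb{H}$ is Lipschitz on the compact region in which $(x^{(n)},\theta)$ lives w.h.p. Subtracting the limiting integral equation \eqref{eq:SIFCLT:LLN_ODE} from the rescaled decomposition and applying Gronwall's inequality to $g_n(t) \defeq \sup_{s \le t}\norm{(x^{(n)}(s),\theta(s)) - (x(s),\vartheta(s))}$ closes the argument: $g_n(0) \ConvInProb 0$ by \ref{itm:as2}, the martingale and drift-approximation errors are uniformly $\smallO{1}$, and Lipschitz continuity of $\mathbb{H}$ propagates smallness over all of $[0,T]$, yielding $\sup_{0<t\le T}\norm{(X(t)/n,\theta(t)) - (x,\vartheta)} \ConvInProb 0$.
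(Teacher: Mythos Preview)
Your overall architecture---Doob--Meyer decomposition, vanishing of the scaled martingale, compensator approximation, Lipschitz continuity of $\mathbb{H}$, and a Gronwall closure---matches the route taken by the paper (which in turn defers to \cite{jacobsen2016large}). The one substantive difference is in how you propose to identify the drift. You describe a size-biasing argument together with an $L^{2}$ second-moment bound based on pairwise pairing correlations being $\BigO{1/n}$. The paper instead uses the dynamic graph construction to obtain an \emph{exact} conditional law: given $\history{t-}$, the neighbourhood $(X_{SI,i},X_{SS,i})$ of a still-susceptible degree-$k$ node is hypergeometric (Remark~\ref{remark:susceptibleNbd}), and the required mixed moments are then written down in closed form and compared term-by-term to their multinomial counterparts. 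This buys explicit error bounds of the form $\BigO{1/X_{S\Bigcdot}}$ and reduces the ``bookkeeping'' you flag as the main obstacle to a routine factorial-moment calculation, rather than a correlation estimate across half-edge pairings. Your approach would also work, but the hypergeometric identification is cleaner and is precisely the device that the paper reuses in the FCLT proof (Lemma~\ref{lemma:deterministic-limit}).
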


	Observe that in the absence of recovery, the numbers of susceptible and infected individuals are linearly related as $X_{\mathrm{S}}+X_{\mathrm{I}} =n$ in the standard susceptible-infected-recovered  (SIR) model. The proof therefore follows immediately by setting the recovery rate in the SIR model to zero and assuming that there is only one layer in \cite{jacobsen2016large}.  The crucial observation is that the neighbourhood distribution of a susceptible vertex, conditional on the process history, can be expressed as a hypergeometric distribution  (see Lemma~\ref{remark:susceptibleNbd})  whose mixed moments can be approximated by the corresponding multinomial ones. This allows us to ``average out'' the individual-based quantities such as $X_{\mathrm{SI},i}$ for $i \in S$.  The convergence is then established by calculating several quadratic variations.   The proof of our FCLT presented in
	 Section~\ref{sec:FCLT}
	exploits similar calculations.


\section{Functional Central Limit Theorem}
\label{sec:FCLT}
Having obtained the functional law of large numbers, we now derive a functional central limit theorem for~$X$ after an appropriate scaling.  To this end, we begin by first defining the scaled martingale process 
\begin{align}
	\label{eq:SIFCLT:M_defn}
M(t) ={}& ( M_{\mathrm{S}}(t),  M_{\mathrm{SI}}(t), M_{\mathrm{SS}}(t))  \defeq  n^{-1/2} M' (t)  \eqcomma 
\end{align}
which is square integrable. 
We study the quadratic variation of the scaled martingale $M(t)$. The idea is to check whether either the optional or the predictable quadratic variation of the scaled process $ M$ 
converges in probability to a deterministic limit. If either of them does,  and if   the paths of $M$  become approximately continuous in the limit (``big'' jumps disappear),   we can make use of   
the \emph{Rebolledo theorem} (see Appendix~\ref{sec:rebolledo}; also \cite{rebolledo1980central,helland1982central})   to establish the asymptotic limit.



For each $\epsilon > 0$,
define~ 
\begin{align}
M^\epsilon(t) \defeq {}&  ( M_{\mathrm{S}}^\epsilon(t), M_{\mathrm{SI}}^\epsilon(t), M_{\mathrm{SS}}^\epsilon(t))
\label{eq:Mepsilon-defn}
\end{align}
to be a vector of square integrable martingales (only) containing all jumps of $M(t)$ larger in absolute value than~$\epsilon$. 
%

We use the shorthand notation $\predictableVariation{M}(t)$   for the $3 \times 3$ matrix of predictable covariation processes of the components of $M(t)$. That is,
\begin{align}
\predictableVariation{M}(t) \defeq{} & \begin{pmatrix}
\predictableVariation{M_{\mathrm{S}}}(t) & \predictableVariation{M_{\mathrm{S}} ,  M_{\mathrm{SI}} } (t)  & \predictableVariation{M_{\mathrm{S}} ,M_{\mathrm{SS}} }(t)  \\
\predictableVariation{M_{\mathrm{SI}} ,M_{\mathrm{S}}}(t) & \predictableVariation{ M_{\mathrm{SI}} }(t)  & \predictableVariation{M_{\mathrm{SI}} , M_{\mathrm{SS}} } (t) \\
\predictableVariation{M_{\mathrm{SS}}, M_{\mathrm{S}}}(t) &  \predictableVariation{ M_{\mathrm{SS}} , M_{\mathrm{SI}} }(t)  & \predictableVariation{M_{\mathrm{SS}} }(t)  \\
\end{pmatrix}\eqstop
\end{align}
Here $\predictableVariation{M_{\mathrm{S}}}(t) \defeq \predictableVariation{M_{\mathrm{S}}, M_{\mathrm{S}}}(t)$ etc., by convention.  See Appendix~\ref{sec:rebolledo} for definitions of quadratic variation processes.    Define $\predictableVariation{M^\epsilon}$ similarly. We shall study the large graph limits of $\predictableVariation{M}(t)$ and $\predictableVariation{M^\epsilon}(t)$ as $n \rightarrow~\infty$ for each $t \in \setTime{0}$. For this purpose,  we need the neighbourhood distribution of a susceptible vertex~$i$ of degree~$k$, \ie, the distribution of $(X_{\mathrm{SI},i}, X_{\mathrm{SS},i})$ for a vertex~$i \in S_k$, for all $k \in \setOfNaturals$. 

We quote an important remark from \cite{jacobsen2016large} that would come in handy for the derivations.

 \begin{myRemark}\label{remark:uniform-bounds}
Note that the total number of edges in the graph is $2^{-1} \sum_i d_i$. With $0< \alpha_{\mathrm{S}}<1$, it immediately follows that $n^{-1} X_{\mathrm{SI}} \leq n^{-1} X_{\mathrm{S} \Bigcdot  } \leq  \partial \psi(1)$ and $n^{-1} X_{\mathrm{SS}} \leq n^{-1} X_{\mathrm{S} \Bigcdot  } \leq  \partial \psi(1)$ for sufficiently large $n \in \setOfNaturals$. Also note that  $0\le \theta\le 1$  and  $\partial\psi(\theta) \le \partial\psi(1)$, so that    $\alpha_{\mathrm{S}} \theta \partial \psi(\theta) \leq \partial \psi(1)$. By virtue of \ref{itm:as1}, $n^{-1} X_{\mathrm{S} \Bigcdot}$ is bounded away from $0$ on $\setTime{0}$ and hence, so is $\theta$ (see \cite[Section~3.1]{jacobsen2016large}). As a consequence of  \cite[Lemma~1(b)]{jacobsen2016large}, we can take the same lower bound for $\alpha_{\mathrm{S}} \theta \partial \psi(\theta)$. Let us denote by $\xi>0$ the uniform lower bound for $n^{-1} X_{\mathrm{S} \Bigcdot}$ and $ \alpha_{\mathrm{S}} \theta \partial \psi(\theta)$ so that we can write $ n^{-1} X_{\mathrm{S} \Bigcdot}  \in [  \xi, \partial \psi(1) ] \subset \setOfPositiveReals $.  Note that all these bounds hold with probability $1$. 
 \end{myRemark}

%


  \subsection{Deterministic Limit of $\predictableVariation{M}(t)$}
  Recall that 
  $(x, \vartheta) \defeq ( (x_{\mathrm{S}}, x_{\mathrm{SI}}, x_{\mathrm{SS}}), \vartheta )$. Let us begin by defining the following  operators acting on the function $(x, \vartheta) $, 
  \begin{align}
  \begin{aligned}
  v_{\mathrm{S}  }  \defeq {} &  \beta x_{\mathrm{SI}}  \eqcomma  \\
  v_{\mathrm{SI}  }\defeq {} &   \beta \left(  \frac{x_{\mathrm{SI}} (x_{\mathrm{SS}}-x_{\mathrm{SI}} )^2   }{x_{\mathrm{S}}^2}  \diffOperator{  \psi(\vartheta )  }{3}
    - \frac{    x_{\mathrm{SI}} (x_{\mathrm{SS}}-3x_{\mathrm{SI}} )    }{x_{\mathrm{S}}}    \diffOperator{  \psi(\vartheta)  }{2}     + x_{\mathrm{SI}}  \right)   \eqcomma   \\
  v_{\mathrm{SS}  }   \defeq {} &   4 \beta \frac{ x_{\mathrm{SI}} x_{\mathrm{SS}}  }{x_{\mathrm{S}} } \left(  \frac{ x_{\mathrm{SS}}   }{x_{\mathrm{S}}}  \diffOperator{ \psi(\vartheta )  }{3}   + \diffOperator{ \psi(\vartheta)  }{2}    \right)  \eqcomma  \\
  v_{\mathrm{S} , \mathrm{SI} } \defeq {} &  - \beta  \left(\frac{  x_{\mathrm{SI}} ( x_{\mathrm{SS}} - x_{\mathrm{SI}})  }{  x_{\mathrm{S}} } \diffOperator{\psi(\vartheta )   }{2}- x_{\mathrm{SI}}  \right)   \eqcomma   \\
  v_{\mathrm{S} , \mathrm{SS} }  \defeq {} &    2\beta \frac{  x_{\mathrm{SI}} x_{\mathrm{SS}}  }{x_{\mathrm{S}} } \diffOperator{    \psi(\vartheta ) }{2}   \eqcomma    \\
  v_{\mathrm{SI}, \mathrm{SS}  }  \defeq {} & -2 \beta \frac{  x_{\mathrm{SI}}x_{\mathrm{SS}} (x_{\mathrm{SS}}- x_{\mathrm{SI}})  }{ x_{\mathrm{S}}^2  }    \diffOperator{   \psi(\vartheta ) }{3 } \eqstop     \\
  \end{aligned}
  \label{eq:vdefinition}
  \end{align}

  The intuition behind the operators in Equation~\eqref{eq:vdefinition} will be clear when we compute the predictable quadratic variation $\predictableVariation{M} $ of the martingale process $M$ and seek its limit as $n \rightarrow \infty$. In particular, we shall see that each of the terms on the right-hand side of Equation~\eqref{eq:vdefinition} is a function of various multinomial moments, which we find as a limit of the corresponding hypergeometric ones in order to approximate the expected jump sizes conditional on the process history.

  Now,  define a $\setTime{}$-indexed family of matrices $\{ V(t) \}$ as follows
    \begin{align}
    V(t) \defeq{} & \begin{pmatrix}
    V_{\mathrm{S}}(t) &  V_{\mathrm{S} ,  \mathrm{SI} } (t)  & V_{\mathrm{S} ,\mathrm{SS} }(t)  \\
    V_{\mathrm{SI} ,\mathrm{S}}(t) & V_{ \mathrm{SI} }(t)  & V_{\mathrm{SI} , \mathrm{SS} } (t) \\
    V_{\mathrm{SS}, \mathrm{S}}(t) & V_{ \mathrm{SS} , \mathrm{SI} }(t)  & V_{\mathrm{SS} }(t)  \\
    \end{pmatrix} \eqcomma \label{eq:Vmatrix}
    \end{align}
  where,  given $  v_{id_1,id_2}( x,\vartheta  )  \text{ for }  id_1,id_2 \in \{\mathrm{S}, \mathrm{SI}, \mathrm{SS} \} $  
  in Equation~\eqref{eq:vdefinition},
  \begin{align}
  V_{id_1, id_2} (t) \defeq  {}& \int_{0}^{t}  v_{id_1,id_2}( x(s),\vartheta(s)  )  \differential{s} \eqcomma  \label{eq:V_ODE} 
  \end{align}
with the convention $   v_{id_1,id_2} \defeq  v_{id_2,id_1} \text{ for }  id_1,id_2 \in \{\mathrm{S}, \mathrm{SI}, \mathrm{SS} \} $ and $ v_{id_1,id_2} \defeq  v_{id_1} $ whenever
  $ id_1= id_2 \in \{\mathrm{S}, \mathrm{SI}, \mathrm{SS} \}   $. Note that this also sets the convention $    V_{id_1,id_2}(t ) \defeq  V_{id_2,id_1}( t) \text{ for }  id_1,id_2 \in \{\mathrm{S}, \mathrm{SI}, \mathrm{SS} \} $ and $ V_{id_1,id_2}( t  ) \defeq  V_{id_1}( t )  $ whenever
  $ id_1= id_2 \in \{\mathrm{S}, \mathrm{SI}, \mathrm{SS} \}   $  for each $t \in \setTime{}$.

  Let us now present our first result providing the deterministic limit of $\predictableVariation{M} $ in the following lemma.  Recall that proving a deterministic limit of  $\predictableVariation{M} $  would satisfy one of the conditions of the Rebolledo  theorem.   The key strategy in establishing  the result will be to approximate various hypergeometric moments by the corresponding multinomial ones. 

  \begin{myLemma}
  Consider the stochastic SI model described in Section~\ref{subsec:model}. Assume \ref{itm:as1}, \ref{itm:as2} and \ref{itm:as3} for a configuration model graph $\mathcal{G}(\psi, n)$. Then,
  \begin{align}
  \predictableVariation{M}(t)  \ConvInProb {}& V(t)  \eqcomma \nonumber
  \end{align}
for each $t \in \setTime{}$,  as $ n \rightarrow \infty$ where $V(t)$ is as defined in Equation~\eqref{eq:Vmatrix}, and $ (x, \vartheta)  $ is the solution of Equation~\eqref{eq:SIFCLT:LLN_ODE}
  	with 
		$x (0) = \alpha $ and $\vartheta (0)=1$.
  	\label{lemma:deterministic-limit}
  \end{myLemma}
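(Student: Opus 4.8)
The plan is to derive $\predictableVariation{M}$ in closed form from the jump mechanism of the process, ``average out'' the individual neighbourhood counts $X_{SI,i},X_{SS,i}$ through the hypergeometric law of Remark~\ref{remark:susceptibleNbd}, replace that law by its multinomial surrogate, and then pass to the large-graph limit with the help of Theorem~\ref{thm:LLN} and Assumption~\ref{itm:as3}; a concluding bounded-convergence argument then upgrades pointwise convergence of the integrand to convergence of $\predictableVariation{M}(t)$. First I would record the jump structure: the only transitions of $X$ are infections of a susceptible node, and when a node $i\in S_k$ with neighbourhood $(X_{SI,i},X_{SS,i})$, $X_{SI,i}+X_{SS,i}=k$, is infected, $X$ jumps by $\Delta_i\defeq(-1,\,X_{SS,i}-X_{SI,i},\,-2X_{SS,i})\transpose$, which in the dynamic construction occurs with $\history{s-}$-predictable intensity $\beta n_{SI}\,\probOf{X_{SI,i}=n_{SI},X_{SS,i}=n_{SS}\mid\history{s-}}$, the conditional law being the hypergeometric of Remark~\ref{remark:susceptibleNbd}; summing these intensities over $i$ and over admissible $(n_{SI},n_{SS})$ returns $-\FOperator{X_{SI}}{S}=\beta X_{SI}$ and the remaining entries of $\FOperator{X}{X}$, which is the consistency check. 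Because $M=n^{-1/2}M'$, the drift term in \eqref{eq:doob-decomposition} is continuous, and $\predictableVariation{M}$ is the compensator of $\optionalVariation{M}(t)=\sum_{s\le t}\delta M(s)\,\delta M(s)\transpose$, this yields the exact identity
\begin{align*}
\predictableVariation{M}(t) = \frac1n\int_0^t \sum_k\sum_{i\in S_k(s)} \beta\,\Eof{X_{SI,i}\,\Delta_i\Delta_i\transpose \given \history{s-}}\,\differential{s}\eqcomma
\end{align*}
the conditional expectation being taken under the hypergeometric law, in which $\Delta_i=\Delta_i(X_{SI,i},X_{SS,i})$ is itself random.

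Next I would evaluate this conditional expectation. Every entry of $\beta X_{SI,i}\Delta_i\Delta_i\transpose$ is a polynomial in $(X_{SI,i},X_{SS,i})$ of total degree at most $3$ — degree exactly $3$ in the $SI$--$SI$, $SI$--$SS$ and $SS$--$SS$ entries, owing to the extra factor $X_{SI,i}$ coming from the infection rate, which is precisely why third derivatives of $\psi$, and hence Assumption~\ref{itm:as3}, are needed. Writing these polynomials in the falling-factorial basis $(x)_m=x(x-1)\cdots(x-m+1)$ and inserting the exact mixed factorial moments of the hypergeometric distribution,
\begin{align*}
\Eof{(X_{SI,i})_a(X_{SS,i})_b \given \history{s-}} = (k)_{a+b}\,\frac{(X_{SI})_a\,(X_{S\Bigcdot}-X_{SI})_b}{(X_{S\Bigcdot})_{a+b}}\eqcomma\qquad i\in S_k\eqcomma
\end{align*}
together with the identity $X_{SS}=X_{S\Bigcdot}-X_{SI}$ (each susceptible half-edge is the susceptible endpoint of exactly one $SI$- or $SS$-edge, the latter double-counted by $X_{SS}$), I would replace the hypergeometric fraction by its multinomial surrogate $(X_{SI}/X_{S\Bigcdot})^a(X_{SS}/X_{S\Bigcdot})^b$ at a uniform cost of $\BigO{n^{-1}}$, legitimate because $X_{S\Bigcdot}\ge\xi n$ with high probability on $\setTime{0}$ by Assumption~\ref{itm:as1} and Remark~\ref{remark:uniform-bounds}. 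Summing over $i\in S_k$ then contributes a factor $X_{S_k}(s)$, and summing over $k$ produces the weighted degree sums $n^{-1}\sum_k (k)_m X_{S_k}(s)$, $m=1,2,3$. By Theorem~\ref{thm:LLN} together with the degree-class bookkeeping underlying \cite{jacobsen2016large} one has $n^{-1}\sum_k z^k X_{S_k}(s)\ConvInProb\alpha_S\psi(\vartheta(s)z)$ for $|z|\le 1$, whence, differentiating at $z=1$ — with Assumption~\ref{itm:as3} justifying the interchange of limit and differentiation via uniform integrability of $\{k^3 p_k\}$ — one gets $n^{-1}\sum_k (k)_m X_{S_k}(s)\ConvInProb\alpha_S\vartheta(s)^m\partial^m\psi(\vartheta(s))$ for $m=1,2,3$. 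Combined with $X_S/n\to x_S=\alpha_S\psi(\vartheta)$, $X_{SI}/n\to x_{SI}$, $X_{SS}/n\to x_{SS}$, and $X_{S\Bigcdot}/n\to\alpha_S\vartheta\partial\psi(\vartheta)=x_{SI}+x_{SS}$ from Theorem~\ref{thm:LLN}, this gives the elementary limit
\begin{align*}
\frac1n\sum_{i\in S(s)}(X_{SI,i})_a(X_{SS,i})_b\ \ConvInProb\ \frac{x_{SI}(s)^a\,x_{SS}(s)^b}{x_S(s)^{a+b-1}}\,\diffOperator{\psi(\vartheta(s))}{a+b}\eqcomma
\end{align*}
since collecting the surviving powers of $\alpha_S\psi(\vartheta)=x_S$ and of $\alpha_S\vartheta\partial\psi(\vartheta)$ converts $\partial^m\psi(\vartheta)/(\partial\psi(\vartheta))^m$ into $\psi(\vartheta)^{1-m}\diffOperator{\psi(\vartheta)}{m}$. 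Substituting this into the integrand and carrying out the (routine) falling-factorial bookkeeping of the six entries of $\beta X_{SI,i}\Delta_i\Delta_i\transpose$ reproduces exactly the functions $v_{id_1,id_2}(x(s),\vartheta(s))$ of \eqref{eq:vdefinition}.

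Finally, on $[0,t]$ the integrand is, with high probability, dominated by a deterministic constant depending only on $\beta$, $\psi(1)$, $\xi$ and $\sum_k k^3 p_k$ (again by Remark~\ref{remark:uniform-bounds} and Assumption~\ref{itm:as3}), so bounded convergence in probability turns the pointwise convergence of the integrand into $\predictableVariation{M}(t)\ConvInProb V(t)$ for each $t\in\setTime{0}$, with $V(t)$ as in \eqref{eq:Vmatrix}. The hard part will be making the middle step uniform: one has to control simultaneously the hypergeometric-to-multinomial error (which blows up if $X_{S\Bigcdot}$ is small — ruled out by Assumption~\ref{itm:as1} and the uniform lower bound $\xi$ of Remark~\ref{remark:uniform-bounds}) and the degree-distribution tail $\sum_{k>K}k^3 p_k$ (controlled by Assumption~\ref{itm:as3}), uniformly in $s\in[0,t]$, so that the dominated-convergence step is warranted; the third-moment assumption is forced by the cubic degree of the $SI$-component's compensator, which is also the source of the operator $\diffOperator{\psi(\vartheta)}{3}$ appearing in \eqref{eq:vdefinition}. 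A secondary technical point is that the per-degree-class limit $n^{-1}X_{S_k}(s)\to\alpha_S p_k\vartheta(s)^k$ used above is slightly stronger than the aggregate statement of Theorem~\ref{thm:LLN}; it is implicit in that theorem's proof (and in \cite{jacobsen2016large}) and can be isolated by a short separate argument.
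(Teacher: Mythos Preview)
Your plan is essentially the paper's proof: both compute $\predictableVariation{M}$ from the jump mechanism, replace hypergeometric moments by multinomial surrogates, truncate the tail in $k$ via \ref{itm:as3}, invoke the per-degree-class limit $n^{-1}X_{S_k}\to\alpha_S p_k\vartheta^k$ (the paper cites \cite[Lemma~1(a)]{jacobsen2016large} for this, matching your ``secondary technical point''), and close with Theorem~\ref{thm:LLN} and Lipschitz continuity.

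One point to watch: your ``exact identity'' for $\predictableVariation{M}(t)$ already carries the hypergeometric conditional expectation inside the integrand, whereas the paper writes the predictable variation as the \emph{random} sum
\[
\predictableVariation{M_{SI}}(t)=\frac{1}{n}\int_0^t\sum_k\sum_{i\in S_k}\beta\,X_{SI,i}(X_{SS,i}-X_{SI,i})^2\,\differential{s}\eqcomma
\]
consistent with its Doob--Meyer compensator $\FOperator{X}{X}$ in \eqref{eq:doob-decomposition}, which itself involves the individual $X_{SI,i}$'s and thus presupposes a filtration in which these are predictable. The paper therefore needs an extra step your outline skips: it shows that $\Delta_{1,1}^{(k)}(t)\defeq\sum_{i\in S_k}X_{SI,i}(X_{SS,i}-X_{SI,i})^2-X_{S_k}C_h^k(t)$ is a zero-mean \cadlag\ martingale with jumps bounded by $k^6$, so Doob's inequality yields $n^{-1}\sup_t|\Delta_{1,1}^{(k)}|\ConvInProb 0$ (the bound \eqref{eq:delta-part1}). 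Your dynamic-construction viewpoint effectively absorbs this step by working in the coarser filtration where the neighbourhood counts are not yet revealed---which is legitimate, but then the predictable variation you compute is not literally the one attached to the paper's $M$ via \eqref{eq:doob-decomposition}. You should either flag the change of filtration or supply the analogue of \eqref{eq:delta-part1}.

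Beyond this, your tail-truncation, hypergeometric-to-multinomial, and Lipschitz replacement steps expand into exactly the paper's Step~I and the four-term decomposition \eqref{eq:delta-part1}--\eqref{eq:delta-part4}, and your final bounded-convergence argument plays the role of the paper's $\sup_t|\Delta_2|\ConvInProb 0$.
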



  \begin{proof}[Proof of Lemma~\ref{lemma:deterministic-limit}]
  	To show  convergence of the matrix random process $  \predictableVariation{M}(t)   $ to $V(t)$, we show component-wise convergence of the respective components. The general strategy to prove convergence for these components remains the same.
To conserve space, we only  demonstrate here the strategy for  establishing $M_{\mathrm{SI}}(t)   \ConvInProb V_{\mathrm{SI}}(t) $. 
Remaining   assertions follow similarly. 

  \noindent\paragraph{Computation of $\predictableVariation{M_{\mathrm{SI}}}$}
  The process $M_{\mathrm{SI}}$  jumps only if a susceptible vertex gets infected. 
  Therefore, the predictable quadratic variation 
	is computed as follows
\begin{align*}
	 \predictableVariation{M_{\mathrm{SI}}}(t) = \predictableVariation{   n^{-1/2} M'_{\mathrm{SI}}   }(t) ={} & \int_{0}^{t}  \sum_k   \frac{1}{n}  \sum_{i \in S_k}  \beta X_{\mathrm{SI},i} (X_{\mathrm{SS},i}- X_{\mathrm{SI},i} )^2    \differential{s}   \eqstop
\end{align*}

   Now, for a randomly selected~$i \in S_k$, we seek to find the (conditional) moments $\Eof{  X_{\mathrm{SI},i} (X_{\mathrm{SS},i}- X_{\mathrm{SI},i} )^2  \mid \history{t-}  }$. 	 Define the function $C_{h}^k :\setTime{0} \rightarrow \setOfReals   $ as 
   \begin{align*}
	C_{h}^k(t) \defeq &   \Eof{  X_{\mathrm{SI},i}(t) (X_{\mathrm{SS},i}(t) - X_{\mathrm{SI},i}(t) )^2 \mid \history{t-}   }   \eqstop 
   \end{align*}
   Following the computations in Appendix~\ref{sec:hypergeometric-moments}, we get
	 \begin{align*}
	C_{h}^k(t)
	 ={} &  \frac{   \NPermuteR{k}{3} X_{\mathrm{SI}}  }{\NPermuteR{X_{\mathrm{S} \Bigcdot }}{3} }    \left[     \NPermuteR{X_{\mathrm{SS}}}{2}  - 2 (X_{\mathrm{SI}}-1    )X_{\mathrm{SS}}
		+    \NPermuteR{ X_{\mathrm{SI}}-1    }{2}   \right] \\
	&  -   \frac{   \NPermuteR{k}{2} X_{\mathrm{SI}} }{\NPermuteR{X_{\mathrm{S} \Bigcdot }}{2} } [ X_{\mathrm{SS}} - 3(X_{\mathrm{SI}}-1)     ]  + k \frac{X_{\mathrm{SI}}}{  X_{\mathrm{S} \Bigcdot }  } \eqstop
	 \end{align*}
To be precise, the processes on the right-hand side of the above equation are evaluated at $t-$. To approximate the hypergeometric moments by corresponding multinomial ones, define the  multinomial compensator $C_{m}^k : \setTime{0} \times [\xi,  \partial \psi(1)] \rightarrow \setOfReals   $ as
\begin{align*}
C_{m}^k (t,z) \defeq {}& \frac{   \NPermuteR{k}{3}   n^{-3} X_{\mathrm{SI}}  }{z^{3} }    (    {X_{\mathrm{SS}}}^{2}  - 2 X_{\mathrm{SI}}X_{\mathrm{SS}}   + X_{\mathrm{SI}}^2    ) \\
& {}-   \frac{   \NPermuteR{k}{2} n^{-2} X_{\mathrm{SI}} }{z^{2} } ( X_{\mathrm{SS}} - 3X_{\mathrm{SI}}     )
+  \frac{k n^{-1}   X_{\mathrm{SI}}}{  z}     \\
={} &     \frac{   \NPermuteR{k}{3}   n^{-3} X_{\mathrm{SI}}   (  X_{\mathrm{SS}}- X_{\mathrm{SI}}   ) ^2  }{z^{3} }      -   \frac{   \NPermuteR{k}{2} n^{-2} X_{\mathrm{SI}}  ( X_{\mathrm{SS}} - 3X_{\mathrm{SI}}     ) }{z^{2} }
 +   \frac{  k n^{-1}  X_{\mathrm{SI}}}{  z}   \eqstop
\end{align*}
Again, to be precise, the processes on the right-hand side of the above equation are evaluated at $t-$. Please observe that there exists an $L >0$ such that
\begin{align}
C_{m}^k (t,z(t) ) \leq L k^3 \eqcomma
\label{eq:C_mBound}
\end{align}
  uniformly in $n$. This holds because $n^{-1}X_{\mathrm{SI}}$ and $n^{-1}X_{\mathrm{SS}}$ are uniformly bounded above by virtue of Remark~\ref{remark:uniform-bounds} and $z$ is bounded away from zero, by definition. The function $C_{m}^k (t,z(t) )   $ is also Lipschitz continuous in $z$.  Now recall the definition of $v_{\mathrm{SI}}$ from Equation~\eqref{eq:vdefinition} and define
 \begin{align*}
 \Delta_{}(t) \defeq {}& \sum_k   \frac{1   }{n}  \sum_{i \in S_k} \beta   X_{\mathrm{SI},i}(t) (X_{\mathrm{SS},i}(t) - X_{\mathrm{SI},i} (t) )^2    - v_{\mathrm{SI}}(x(t), \vartheta(t) )   \\
 ={}& \sum_k   \frac{1   }{n}  \sum_{i \in S_k} \beta   X_{\mathrm{SI},i}(t) (X_{\mathrm{SS},i}(t)- X_{\mathrm{SI},i}(t) )^2   -v_{\mathrm{SI}}(  n^{-1}X(t), \theta(t)  )  \\
 &  +v_{\mathrm{SI}}(  n^{-1}X(t) , \theta (t)  )  - v_{\mathrm{SI}}(x(t), \vartheta(t) )  \\
 ={} &  \Delta_{1}(t) +  \Delta_{2}(t)  \eqcomma
 \end{align*}
 where $  \Delta_{1}(t) \defeq \sum_k   \frac{1   }{n}  \sum_{i \in S_k} \beta   X_{\mathrm{SI},i}(t) (X_{\mathrm{SS},i}(t) - X_{\mathrm{SI},i}(t) )^2   -v_{\mathrm{SI}}(  n^{-1}X(t), \theta(t)  ) $, and $  \Delta_{2}(t) \defeq  v_{\mathrm{SI}}(  n^{-1}X(t) , \theta(t)  )  - v_{\mathrm{SI}}(x(t), \vartheta(t)  ) $.
To show $\predictableVariation{M_{\mathrm{SI}}} \ConvInProb V_{\mathrm{SI}}   $, it suffices to show $ \sup_{t \in \setTime{}}    |    \Delta_{}(t) | \ConvInProb 0$. We  achieve this by separately showing $ \sup_{t \in \setTime{}}    |    \Delta_{1}(t) | \ConvInProb 0   $ and $\sup_{t \in \setTime{}}    |    \Delta_{2}(t) | \ConvInProb 0$.

\paragraph*{Convergence of  $\Delta_{1}(t)$} See that
 \begin{align*}
 \Delta_{1}(t)  =  {} & 
  \sum_k   \frac{1   }{n}  \sum_{i \in S_k} \beta   X_{\mathrm{SI},i} (X_{\mathrm{SS},i}- X_{\mathrm{SI},i} )^2
  - \beta [  \frac{  n^{-3}     X_{\mathrm{SI}} (X_{\mathrm{SS}}-X_{\mathrm{SI}} )^2   }{\alpha_{\mathrm{S}}^2}   \frac{\partial^3  \psi(\theta ) }{   (   \partial  \psi(\theta ))^3 }   \\
 & {} - \frac{  n^{-2}  X_{\mathrm{SI}} (X_{\mathrm{SS}}-3X_{\mathrm{SI}} )    }{\alpha_{\mathrm{S}} }   \frac{\partial^2  \psi(\theta ) }{   (  \partial  \psi(\theta ))^2 }
    +  n^{-1}  X_{\mathrm{SI}}  ]        \\
 ={} &  \sum_k  [ \frac{1   }{n}  \sum_{i \in S_k} \beta   X_{\mathrm{SI},i} (X_{\mathrm{SS},i}- X_{\mathrm{SI},i} )^2
  - \beta \{  \frac{  n^{-3}     X_{\mathrm{SI}} (X_{\mathrm{SS}}-X_{\mathrm{SI}} )^2   }{\alpha_{\mathrm{S}}^2}  \frac{  \NPermuteR{k}{3} \theta^k p_k  }{   (\theta \partial \psi(\theta)    )^3 } \\
 & {} -
 \frac{  n^{-2}  X_{\mathrm{SI}} (X_{\mathrm{SS}}-3X_{\mathrm{SI}} )    }{\alpha_{\mathrm{S}}}  \frac{  \NPermuteR{k}{2} \theta^k p_k  }{   (\theta \partial \psi(\theta)    )^2 }
  + n^{-1}  X_{\mathrm{SI}}  \frac{k \theta p_k}{   \theta \partial \psi(\theta)    }  \}   ]  \\
 ={} &  \sum_k  [ \frac{1   }{n}  \sum_{i \in S_k} \beta   X_{\mathrm{SI},i} (X_{\mathrm{SS},i}- X_{\mathrm{SI},i} )^2  - \beta \alpha_{\mathrm{S}} p_k \theta^k C_{m}^k (t,  \alpha_{\mathrm{S}} \theta \partial \psi(\theta)  ) ]   \eqstop
 \end{align*}

The second equality follows by expressing the derivatives of the PGFs as sum over all possible degrees $k$ and then collecting terms involving degree $k$.  Define $ \Delta_{1}^{(k)}(t)  \defeq \frac{1   }{n}  \sum_{i \in S_k} \beta   X_{\mathrm{SI},i} (X_{\mathrm{SS},i}- X_{\mathrm{SI},i} )^2  - \beta \alpha_{\mathrm{S}} p_k \theta^k C_{m}^k (t,  \alpha_{\mathrm{S}} \theta \partial \psi(\theta)  ) $.
Our task  boils down to showing that $ \sup_{t \in \setTime{}}  |  \sum_k  \Delta_{1}^{(k)}(t)   |  \ConvInProb 0 $ as $n \rightarrow \infty$. We achieve this  in two steps. First we show that the tails of $\sum_k  \Delta_{1}^{(k)}(t)  $ are negligible. Second, we show that each term $  \Delta_{1}^{(k)}(t)$ converges to zero uniformly in probability for a fixed $k \in \setOfNaturals$.

  \noindent\paragraph{(Step I) Tails are negligible}
  Let us begin by showing that as $N \rightarrow \infty$,
  \begin{align*}
  \sup_{n \in \setOfNaturals}   \sup_{t \in \setTime{}}  |  \sum_{k > N}  \Delta_{1}^{(k)}(t)   |  \ConvInProb 0 \eqstop
  \end{align*}
  Observe that, for sufficiently large $n$, 
	\begin{align}
	|    \frac{1   }{n}  \sum_{k > N}   \sum_{i \in S_k} \beta   X_{\mathrm{SI},i} (X_{\mathrm{SS},i}- X_{\mathrm{SI},i} )^2  |
	\leq {}    \frac{ \beta   }{n}  \sum_{k > N}    k^3 X_{S_k}
	\leq {}  2 \beta  \sum_{k > N}    k^3 p_k  \eqcomma
	 \label{eq:tail-part1}
	\end{align}
  because $  n^{-1} X_{S_k} \leq 2p_k $ for sufficiently large $n$ in the light  of Remark~\ref{remark:uniform-bounds}. Following Remark~\ref{remark:uniform-bounds} and the bound on $C_m^k$ from Equation~\eqref{eq:C_mBound}, we get 
  \begin{align}
  |    \sum_{k > N}   \beta \alpha_{\mathrm{S}} p_k \theta^k C_{m}^k (t,  \alpha_{\mathrm{S}} \theta \partial \psi(\theta)  )   | \leq {}& \beta L \sum_{k> N} k^3 p_k   \eqstop
  \label{eq:tail-part2}
  \end{align}
	Therefore, we get  $ \sup_{n \in \setOfNaturals}   \sup_{t \in \setTime{}}  |  \sum_{k > N}  \Delta_{1}^{(k)}(t)   |  \ConvInProb 0 $, combining inequalities~\eqref{eq:tail-part1} and \eqref{eq:tail-part2} in view  of \ref{itm:as3}.

   \noindent\paragraph{(Step II) Uniform convergence in probability for a fixed k}
   In addition to Step I, it is sufficient to show $\sup_{t \in \setTime{}} |   \Delta_{1}^{(k)}(t)   | \ConvInProb 0 $ for an arbitrarily fixed $k \in \setOfNaturals$ to justify $ \sup_{t \in \setTime{}} |   \Delta_{1}^{}(t)   | \ConvInProb 0       $. Observe that
   \begin{align}
   | \Delta_{1}^{(k)}(t) | ={} & | \frac{1   }{n}  \sum_{i \in S_k} \beta   X_{\mathrm{SI},i} (X_{\mathrm{SS},i}- X_{\mathrm{SI},i} )^2  - \beta \alpha_{\mathrm{S}} p_k \theta^k C_{m}^k (t,  \alpha_{\mathrm{S}} \theta \partial \psi(\theta)  )    |   \nonumber \\
   \leq {} & \beta n^{-1} |  \sum_{i \in S_k}   X_{\mathrm{SI},i} (X_{\mathrm{SS},i}- X_{\mathrm{SI},i} )^2    - X_{S_k} C_h^k(t)   |   \label{eq:delta-part1} \\
   & + \beta n^{-1} X_{S_k}  | C_h^k(t)   - C_{m}^k (t, n^{-1} X_{\mathrm{S} \Bigcdot}   )  |     \label{eq:delta-part2} \\
   & + \beta  |   n^{-1} X_{S_k}  C_{m}^k (t, n^{-1} X_{\mathrm{S} \Bigcdot}   )  - \alpha_{\mathrm{S}} p_k \theta^k  C_{m}^k (t, n^{-1} X_{\mathrm{S} \Bigcdot}   )      |   \label{eq:delta-part3} \\
   & + \beta  \alpha_{\mathrm{S}} p_k \theta^k |  C_{m}^k (t, n^{-1} X_{\mathrm{S} \Bigcdot}   )  - C_{m}^k (t, \alpha_{\mathrm{S}} \theta \partial \psi(\theta)   )    |   \label{eq:delta-part4}   \eqstop
   \end{align}
 We show that each of the above summands  converges uniformly in probability to zero.

  Define the process $ \Delta_{1,1}^{(k)}(t) \defeq  \sum_{i \in S_k}   X_{\mathrm{SI},i} (X_{\mathrm{SS},i}- X_{\mathrm{SI},i} )^2    - X_{S_k} C_h^k(t)  $. Observe that $ \Delta_{1,1}^{(k)}(t)$ is a zero-mean, piecewise constant, \cadlag\ martingale with paths in $D$. The jumps of $ \Delta_{1,1}^{(k)}(t)  $ take place when a vertex of degree-$k$ gets infected. The quadratic variation of $ \Delta_{1,1}^{(k)}(t)  $ is therefore the sum of its squared jumps 
  \begin{align*}
  \optionalVariation{   \Delta_{1,1}^{(k)}} (t) ={} & \sum_{s \leq t}   (\delta \Delta_{1,1}^{(k)}(s)  )^2 \leq k^6 n \eqcomma
  \end{align*}
  because the number of jumps can not exceed $n$. Therefore, by Doob's martingale inequality we get
	$\sup_{ t \in \setTime{}} | n^{-1}   \Delta_{1,1}^{(k)}(t)   |  \ConvInProb 0 $,
  since $\Eof{ \optionalVariation{   \Delta_{1,1}^{(k)}} (t)  } = \Eof{ (\Delta_{1,1}^{(k)}(t) )^2   } = \BigO{n}$.   That is, the quantity in \eqref{eq:delta-part1} converges uniformly in probability to zero.

  For the term in \eqref{eq:delta-part2}, take into account  $ n^{-1}X_{S_k}   \leq 1 $  and  see that
\begin{align*}
	 \sup_{ t \in \setTime{}} |  C_h^k(t)   - C_{m}^k (t, n^{-1} X_{\mathrm{S} \Bigcdot}   )   |   \leq {}&  \frac{ c_1 k^3   }{  X_{S \Bigcdot }(T) -2   }   \eqcomma
\end{align*}
	%
	%
  for some $c_1>0$, because $  X_{\mathrm{S} \Bigcdot }  $ is non-increasing on $\setTime{0} $. Therefore, by \ref{itm:as1}, the quantity in \eqref{eq:delta-part2} converges to zero uniformly in probability.

  Now observe that
  \begin{align*}
    \sup_{ t \in \setTime{}} |   n^{-1} X_{S_k}  C_{m}^k (t, n^{-1} X_{\mathrm{S} \Bigcdot}   )  - \alpha_{\mathrm{S}} p_k \theta^k  C_{m}^k (t, n^{-1} X_{\mathrm{S} \Bigcdot}   )      | \\
   \leq L k^3    \sup_{ t \in \setTime{}} | n^{-1} X_{S_k}  -\alpha_{\mathrm{S}} p_k \theta^k   | &
    \ConvInProb 0  \eqcomma
  \end{align*}
 by virtue of the bound on $C_m^k$ in Equation~\eqref{eq:C_mBound} and \cite[Lemma~1(a)]{jacobsen2016large}. Therefore, the term in \eqref{eq:delta-part3} also converges to zero uniformly in probability.

 Finally, by virtue of Lipschitz continuity of $C_m^k(t,z)$ in $z$, we get
 \begin{align*}
  \sup_{ t \in \setTime{}} |  C_{m}^k (t, n^{-1} X_{\mathrm{S} \Bigcdot}   )  - C_{m}^k (t, \alpha_{\mathrm{S}} \theta \partial \psi(\theta)   )    |   \leq {} & c_2  \sup_{ t \in \setTime{}} |  n^{-1} X_{\mathrm{S} \Bigcdot}  -  \alpha_{\mathrm{S}} \theta \partial \psi(\theta)   |
   \eqcomma
 \end{align*}
 for some $c_2 >0$. Because $  \sup_{ t \in \setTime{}} |  n^{-1} X_{\mathrm{S} \Bigcdot}  -  \alpha_{\mathrm{S}} \theta \partial \psi(\theta)   |   \ConvInProb 0    $ as shown in \cite{jacobsen2016large}, we conclude that the term in \eqref{eq:delta-part4} converges to zero uniformly in probability.

 Having shown the terms in \eqref{eq:delta-part1}, \eqref{eq:delta-part2}, \eqref{eq:delta-part3} and \eqref{eq:delta-part4} converge to zero uniformly in probability, we establish that $\sup_{ t \in \setTime{}}  | \Delta_{1}^{(k)}(t) |     \ConvInProb 0 $
 uniformly in probability for any fixed $k \in \setOfNaturals$. Finally, by virtue of Step I and Step II, we obtain  $ \sup_{ t \in \setTime{}}  | \Delta_{1}(t) |     \ConvInProb 0 $.

 \paragraph{Convergence of  $   \Delta_{2}(t)$}  Note that $ v_{\mathrm{SI}}(  n^{-1}X, \theta  ) $ is Lipschitz continuous on its domain that we can take as $ (0,1] \times [\xi,  \partial \psi(1)]^2 \times [\xi,1]    $, by  Remark~\ref{remark:uniform-bounds}.  Therefore,
 \begin{align*}
  \sup_{ t \in \setTime{}} |  v_{\mathrm{SI}}(  n^{-1}X , \theta  )  - v_{\mathrm{SI}}(x, \vartheta )   |
   \leq {}  c_3  \sup_{ t \in \setTime{}}  \norm{  (n^{-1}X, \theta )-  (x, \vartheta )   }   \eqcomma
 \end{align*}
 for some Lipschitz constant $c_3>0$. Since $ (x, \vartheta)  $ is the solution of Equation~\eqref{eq:SIFCLT:LLN_ODE},
 	with initial condition $x (0) = \alpha $ and $\vartheta (0)=1$, we get by virtue of Theorem~\ref{thm:LLN}, $ \sup_{ t \in \setTime{}} |   \Delta_{2}(t)  |  \ConvInProb 0$.

 	\paragraph{Final Conclusion}
 Since  $ \sup_{ t \in \setTime{}}  | \Delta_{1}(t) |     \ConvInProb 0$ and $ \sup_{ t \in \setTime{}} |   \Delta_{2}(t)  |  \ConvInProb 0$, 
	we conclude $  \sup_{ t \in \setTime{}}  |   \Delta(t)  |  \ConvInProb 0$, which is a sufficient condition for
\begin{align*}
	 \predictableVariation{M_{\mathrm{SI}}} (t)   \ConvInProb V_{\mathrm{SI}}(t)=  {}& \int_{0}^{t}  v_{\mathrm{SI}}(x(s), \vartheta(s)  )  \differential{s}  \eqstop
\end{align*}
  \end{proof}

We remark that the various moment estimates used in the proof above (and elsewhere in the paper) ignore the contributions of self-loops and parallel edges. Their contributions are asymptotically negligible as discussed earlier in Section~\ref{subsec:model}. Also, note that we may need to add a parity edge to the last vertex if the sum of the drawn degrees is not even. This is relevant for the calculations in Steps I and II. However, as discussed earlier in Section~\ref{subsec:model},  the contribution due to this minor adjustment is negligible and hence, is not shown explicitly. 


\subsection{Asymptotic Rarefaction of Jumps}
Recall that $M^\epsilon \defeq   ( M_{\mathrm{S}}^\epsilon,  M_{\mathrm{SI}}^\epsilon, M_{\mathrm{SS}}^\epsilon)$
is the vector of square integrable martingales containing all jumps of components of $M$ larger than~$\epsilon$ in absolute value, for~$\epsilon>0$, \ie, $ M_{id}(t) - M_{id}^\epsilon(t)   $ is a local square integrable martingale and $| \delta M_{id}(t) - \delta M_{id}^\epsilon(t) | \leq \epsilon $ for all ${id} \in \{ \mathrm{S},  \mathrm{SI}, \mathrm{SS}  \}$ and $ t \in \setTime{} $.  We wish to show
$ \predictableVariation{ M_{id}^\epsilon   } (t) \ConvInProb 0$ for all ${id} \in \{ \mathrm{S},  \mathrm{SI}, \mathrm{SS}  \}$ and $t \in \setTime{} $, as $n \rightarrow \infty$. We would like to point out that this condition is essentially the \emph{strong Asymptotic Rarefaction of Jumps Condition of the second type} (strong ARJ$(2)$) as described in \cite{rebolledo1980central,andersen1997statistical}.  Intuitively this ensures that the sample paths of the martingale $M(t)$ are close to continuous in the limit.
Before proceeding further, we offer  the following remark.

\begin{myLemma}
For the configuration model  graph $\mathcal{G}(\psi, n)$ along with~\ref{itm:as3}, the following holds true:
\begin{align}
n^{- \frac{1}{2}} d_{\mathrm{max}}  \ConvAlmostSure 0 \eqcomma
\end{align}
where $d_{\mathrm{max}}$ is the maximum degree observed in a realization of $\mathcal{G}(\psi, n)$.
\label{remark:max-degree}
\end{myLemma}
%

\begin{proof}[Proof of Lemma~\ref{remark:max-degree}]
	The result follows by a direct application of the result in \cite[Theorem~5.2]{barndorff-nielsen1963} along with ~\ref{itm:as3}.

%
\end{proof}

Let us now compute the predictable quadratic variation of $M^\epsilon$ and establish its asymptotic limit.

\begin{myLemma}
  Consider the stochastic SI model described in Section~\ref{subsec:model}. Assume \ref{itm:as1}, \ref{itm:as2} and \ref{itm:as3} for a configuration model graph $\mathcal{G}(\psi, n)$. Consider the vector  $M^\epsilon$ of square integrable martingales containing all jumps of components of $M(t)$ larger than $ \epsilon $ in absolute value for~$\epsilon>0$, as defined in Equation~\eqref{eq:Mepsilon-defn}.  Then, as $ n \rightarrow \infty$, for all ${id} \in \{ \mathrm{S},  \mathrm{SI}, \mathrm{SS}  \}$, for each $t \in \setTime{}$,
  \begin{align}
  \predictableVariation{M^\epsilon_{id}  }(t)  \ConvInProb {}& 0 \eqstop
  \end{align}

\label{lemma:ARJ2}
\end{myLemma}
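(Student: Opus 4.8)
The plan is to exploit the fact that $\predictableVariation{M^\epsilon_{id}}$ is dominated by the contribution of the single biggest jump of $M_{id}$, which, by the scaling $M = n^{-1/2}M'$ and the structure of the SI process, is controlled by the maximum degree $d_{max}$. First I would recall that each component $M'_{id}$ jumps only when an infection event occurs at some node, and that the jump size $|\delta M'_{id}(s)|$ at such an event is bounded by a fixed polynomial in the degree of the infected node — for $M'_S$ the jump is $O(1)$, while for $M'_{SI}$ and $M'_{SS}$ a degree-$k$ infection changes the edge counts by at most $O(k^2)$ (a node of degree $k$ has at most $k$ incident edges, each of which can flip type, and the auxiliary counts $X_{SI,i},X_{SS,i}$ of neighbours are each at most $k$). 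Hence $|\delta M_{id}(s)| \le c\, n^{-1/2} d_{max}^2$ uniformly over $s \in \setTime{0}$, for a constant $c>0$ depending only on $id$.

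Next I would split into two cases according to whether $\epsilon$ exceeds this uniform jump bound. Fix $t \in \setTime{0}$. On the event $A_n \defeq \{ c\,n^{-1/2} d_{max}^2 \le \epsilon \}$, \emph{no} jump of $M_{id}$ exceeds $\epsilon$ in absolute value, so $M^\epsilon_{id} \equiv 0$ on $A_n$ and therefore $\predictableVariation{M^\epsilon_{id}}(t) = 0$ there; by Remark~\ref{remark:max-degree} (applied to $n^{-1/2}d_{max}$, hence a fortiori to $n^{-1/2}d_{max}^2 = (n^{-1/2}d_{max})^2 \cdot \ldots$, or more directly using that $n^{-1/4}d_{max} \ConvAlmostSure 0$ — here I would invoke the same Barndorff-Nielsen moment argument as in Remark~\ref{remark:max-degree}, since \ref{itm:as3} gives a finite third moment and thus controls $d_{max} = \smallO{n^{1/3}}$ a.s.) we have $\probOf{A_n} \to 1$. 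On the complementary event, I would simply bound $\predictableVariation{M^\epsilon_{id}}(t) \le \predictableVariation{M_{id}}(t)$ (since $M_{id}-M^\epsilon_{id}$ and $M^\epsilon_{id}$ are orthogonal local martingales, their predictable variations add), and by Lemma~\ref{lemma:deterministic-limit} the right-hand side converges in probability to the finite deterministic quantity $V_{id}(t)$, hence is tight. Combining: for any $\eta>0$, $\probOf{\predictableVariation{M^\epsilon_{id}}(t) > \eta} \le \probOf{A_n^c} + \probOf{\predictableVariation{M_{id}}(t) > \eta,\, A_n^c}$, and since the first term vanishes and the second is at most $\probOf{A_n^c}\to 0$, we conclude $\predictableVariation{M^\epsilon_{id}}(t) \ConvInProb 0$.

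An alternative, slightly more quantitative route that avoids the case split: write $\predictableVariation{M^\epsilon_{id}}(t) = \int_0^t \sum_k n^{-1}\sum_{i\in S_k} \beta\, (\text{jump functional})^2 \indicator{|\text{jump}| > \epsilon}\,\differential{s}$ using the explicit compensator computed in Lemma~\ref{lemma:deterministic-limit}, and observe that the indicator forces the summand to vanish unless $ck^2 n^{-1/2} > \epsilon$, i.e. $k > (\epsilon/c)^{1/2} n^{1/4}$. Then bound the surviving sum by $\beta L \sum_{k > (\epsilon/c)^{1/2}n^{1/4}} k^3 p_k \cdot \indicator{d_{max} > (\epsilon/c)^{1/2}n^{1/4}}$ using \eqref{eq:C_mBound} and $n^{-1}X_{S_k}\le 2p_k$; the tail sum $\to 0$ by \ref{itm:as3} and the indicator has probability $\to 0$ by the $d_{max}=\smallO{n^{1/4}}$ estimate. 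I would present whichever of these is cleaner; the first is shorter.

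The main obstacle is purely bookkeeping: pinning down the correct polynomial degree in the bound $|\delta M_{id}(s)| \le c\,n^{-1/2}d_{max}^{?}$ for each of the three components (and making sure the compensator terms, not just the raw jumps, obey the same bound, which they do since they are conditional expectations of the jumps), and then confirming that \ref{itm:as3} — finiteness of the third moment $\sum_k k^3 p_k$ — is exactly strong enough to push $n^{-1/2}d_{max}^{2}$ to $0$ almost surely. This is precisely the content of Remark~\ref{remark:max-degree} together with the elementary fact that a sequence of i.i.d.\ (or prescribed-degree) variables with finite $p$-th moment has maximum $\smallO{n^{1/p}}$; no genuinely new idea is required beyond what the paper has already assembled.
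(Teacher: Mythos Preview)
Your overall strategy---bound the jump sizes by a function of $d_{max}$ and then use Remark~\ref{remark:max-degree}---is exactly what the paper does. However, there is a genuine error in your bookkeeping that, as written, makes the first route fail.

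The jump of $M'_{SI}$ (and $M'_{SS}$) when a susceptible node~$i$ of degree~$k$ is infected is \emph{linear} in $k$, not quadratic: since the compensator integral is continuous, $\delta M'_{SI}(s)=\delta X_{SI}(s)=X_{SS,i}-X_{SI,i}$, which has absolute value at most~$k$; likewise $|\delta X_{SS}(s)|=2X_{SS,i}\le 2k$. You seem to have conflated the product $X_{SI,i}(X_{SS,i}-X_{SI,i})$ appearing in the compensator \emph{rate} with the jump \emph{size}. With the (incorrect) bound $|\delta M_{id}(s)|\le c\,n^{-1/2}d_{max}^{2}$, your event $A_n$ requires $n^{-1/2}d_{max}^{2}\to 0$, i.e.\ $d_{max}=\smallO{n^{1/4}}$. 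But \ref{itm:as3} gives only a finite third moment, so the Barndorff--Nielsen argument of Remark~\ref{remark:max-degree} yields merely $d_{max}=\smallO{n^{1/3}}$, and hence $n^{-1/2}d_{max}^{2}=\smallO{n^{1/6}}$, which need not tend to zero. Your parenthetical algebra ``$n^{-1/2}d_{max}^{2}=(n^{-1/2}d_{max})^{2}\cdot\ldots$'' is also off: $(n^{-1/2}d_{max})^{2}=n^{-1}d_{max}^{2}$, not $n^{-1/2}d_{max}^{2}$.

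The fix is immediate: with the correct bound $|\delta M_{id}(s)|\le c\,n^{-1/2}d_{max}$, your event $A_n$ becomes $\{c\,n^{-1/2}d_{max}\le\epsilon\}$, and $\probOf{A_n}\to 1$ is exactly Remark~\ref{remark:max-degree}. This is precisely the paper's argument. (Incidentally, your alternative route survives the overestimate, since replacing the indicator by the larger one $\indicator{k>c'n^{1/4}}$ still leaves you with a tail sum $\sum_{k>c'n^{1/4}}k^{3}p_k\to 0$ by \ref{itm:as3}; but there is no reason to take that detour once the jump bound is corrected.)
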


\begin{proof}[Proof of Lemma~\ref{lemma:ARJ2}]
We proceed in the following two steps.

\noindent\paragraph{Computation of $\predictableVariation{ M_{\mathrm{S}}^\epsilon  }$}

Note that the original process $M'_{\mathrm{S}}$ 
makes only unit jumps. Then, for arbitrary $\epsilon >0 $, 
\begin{align*}
	\predictableVariation{  M_{\mathrm{S}}^\epsilon } (t)  \leq {}&  \int_{0}^{t} \Eof{ (\delta  M_{\mathrm{S}}^\epsilon(s) )^2  \indicator{ | \delta M'_{\mathrm{S}}(s) | > n^{1/2}  \epsilon   } \mid \history{s-}  }  \differential{s} ={}  0 \, \forall \, n > \frac{1}{\epsilon^2}  \\
	\implies {}   \predictableVariation{  M_{\mathrm{S}}^\epsilon } (t)  \ConvInProb {} & 0 \text{ for all }  0 < t \leq T \text{ and for all } \epsilon> 0 \text{ as } n \rightarrow \infty  \eqstop
\end{align*}

%
%

\noindent\paragraph{Computation of $\predictableVariation{ M_{\mathrm{SI}}^\epsilon  }$ and $\predictableVariation{ M_{\mathrm{SS}}^\epsilon  }$}

Note that both $M'_{\mathrm{SI}}$ and $M'_{\mathrm{SS}}$ 
 jump only if infection 
of a vertex occurs. 
This in particular implies that the jump sizes of $M'_{\mathrm{SI}}$ and $M'_{\mathrm{SS}}$  are bounded above by the degree of the vertex getting infected. 
Therefore, they are also bounded above by the maximum degree $d_{\mathrm{max}}$. For an arbitrary $\epsilon >0$, and for  ${id} \in \{ \mathrm{SI}, \mathrm{SS}  \}$,
\begin{align*}
	\predictableVariation{  M_{id}^\epsilon } (t)  \leq {}
&  \int_{0}^{t} \Eof{ (\delta  M_{id}^\epsilon(s) )^2   \indicator{ | n^{-1/2} d_{\mathrm{max}} | > \epsilon   }  \mid \history{s-}  } \differential{s} \\
	\leq {} & t n^{-1} d^2_{\mathrm{max}}   \indicator{ | n^{-1/2} d_{\mathrm{max}} | > \epsilon   }   \eqstop
\end{align*}
By Lemma~\ref{remark:max-degree}, along with  the continuous mapping theorem and the fact that almost sure convergence implies convergence in probability,
  we can claim that  the right-hand side of the above inequality $t n^{-1} d^2_{\mathrm{max}}   \indicator{ | n^{-1/2} d_{\mathrm{max}} | > \epsilon   }    \ConvInProb 0 $ for each $0 < t \leq T$ and $\epsilon >0$. Therefore, for all
$h >0$, $\probOf{   \predictableVariation{  M_{id}^\epsilon } (t) > h   } \leq \probOf{ t n^{-1} d^2_{\mathrm{max}}   \indicator{ | n^{-1/2} d_{\mathrm{max}} | > \epsilon   }   > h    }~\rightarrow~0 $   as   $n \rightarrow \infty$, establishing $   \predictableVariation{  M_{id}^\epsilon } (t)  \ConvInProb 0$ as $n \rightarrow \infty$ for all $0 < t \leq T$ and $\epsilon >0$.  This completes the proof.
\end{proof}

\subsection{Statement and Proof of the FCLT}\label{clt}
Having shown the convergence of all relevant quadratic variation processes, we are now ready to present the  functional central limit theorem. First we state that the function $V$ found in Lemma~\ref{lemma:deterministic-limit} is a positive semidefinite (psd) matrix-valued function on $\setTime{}$, with positive semidefinite increments. Set $V(0) \defeq \boldsymbol{0}$, the $3 \times 3$ null matrix, so that we can  treat $V(t)$ as a psd matrix-valued function on the entirety of $\setTime{0}$. Let us denote the collection of all such psd $3 \times 3$  matrix-valued functions on $\setTime{0}$ that has psd increments and that is $\boldsymbol{0}$ at time zero by $\mathcal{V}$. Given such a matrix-valued function $V  \in \mathcal{V} $, let $G$ be a continuous Gaussian vector martingale such that
$\predictableVariation{G} =\optionalVariation{G}=V$. Such a process always exists \cite[Chapter II, p.~83]{andersen1997statistical}.  In particular, $G(t)- G(s) \sim N(  \boldsymbol{0}, V(t)- V(s) )  $, the multivariate normal distribution  for $ 0 \leq s \leq t$. \newline
\begin{myTheorem}[Functional Central Limit Theorem]
		Consider the stochastic SI model described in Section~\ref{subsec:model}. Assume \ref{itm:as1}, \ref{itm:as2} and \ref{itm:as3} for a configuration model graph $\mathcal{G}(\psi, n)$. Consider, for $t \in \setTime{0}$, the fluctuation process
	\begin{align}
		\label{eq:fluctuation_defn}
		Y(t) \defeq \sqrt{n} ( n^{- 1} X(t) - x(t) )  \eqstop
	\end{align}
	Assume  $\lim_{n \rightarrow \infty} Y(0) = U(0) $, for some nonrandom $U(0)$.  Then, there exists a matrix-valued function $V  \in \mathcal{V} $ on $\setTime{0}$ such that
	\begin{align}
	Y   \ConvInDist U   \text{ in }  D^{(3)} \text{ as }  n \rightarrow \infty \eqcomma
	\end{align}
	where $U$ is a continuous Gaussian vector semimartingale satisfying
	\begin{align}
		U(t) = U(0) + G(t) +  \int_{0}^{t} \nabla \HOperator{x(s), \vartheta(s) }{x} U(s) \differential{s} \eqcomma
	\end{align}
	where    $\nabla \HOperator{x, \vartheta }{x}   \defeq (( \partial_j  \HOperator{x, \vartheta}{i}  ))$ for $i,j \in \{ \mathrm{S},  \mathrm{SI}, \mathrm{SS}  \}$ and $G$ is a continuous Gaussian vector martingale such that $\predictableVariation{G} =\optionalVariation{G}=V$, provided $V$ remains finite on the entirety of $\setTime{0}$ and $\nabla \HOperator{x(s), \vartheta(s) }{x} $ is continuous.

	\label{thm:FCLT}
\end{myTheorem}

\begin{proof}[Proof of Theorem~\ref{thm:FCLT}]
We first prove an FCLT for the martingale process $M$ defined in Equation~\eqref{eq:SIFCLT:M_defn}. We wish to apply Rebolledo's functional central limit theorem for local martingales on $M$. A version of the Rebolledo  theorem adequate for our purpose is provided in Appendix~\ref{sec:rebolledo}. Note that, in the light of  Doob-Meyer decomposition given in Equation~\eqref{eq:doob-decomposition}, $M$ is indeed a pure jump, zero-mean, locally square integrable, \cadlag martingale. After having established an FCLT for the  martingale process $M$, we prove convergence of the fluctuation process $Y$. It suffices to carry out the following three steps.

\noindent\paragraph{(Step I) Deterministic Limit of $\predictableVariation{M}$}
Let $ (x, \vartheta)  $ be the solution of Equation~\eqref{eq:SIFCLT:LLN_ODE}
with initial condition $x (0) = \alpha $ and $\vartheta (0)=1$, as given in Theorem~\ref{thm:LLN}. 
Then, by virtue of Lemma~\ref{lemma:deterministic-limit}, we conclude, for each $t \in \setTime{}$, $\predictableVariation{M}(t)  \ConvInProb V(t)$, where the matrix-valued function~$V$ is defined in Equation~\eqref{eq:Vmatrix}, and we set $V(0) \defeq \boldsymbol{0}$, the $3 \times 3$ null matrix.

 \noindent\paragraph{(Step II)  Asymptotic Rarefaction of Jumps}
 Let $\epsilon>0$ be arbitrary. Consider the vector  $M^\epsilon$ of square integrable martingales containing all jumps of components of $M(t)$ larger than~$\epsilon$ in absolute value for~$\epsilon>0$, as defined in Equation~\eqref{eq:Mepsilon-defn}. Then, by means of Lemma~\ref{lemma:ARJ2}, we conclude $\predictableVariation{M^\epsilon_{id}}(t)  \ConvInProb  0 $,
   for each $t \in \setTime{}$ and $id \in \{\mathrm{S}, \mathrm{SI}, \mathrm{SS}\}$. 

   Now let $G$ be the continuous Gaussian vector martingale such that  $\predictableVariation{G} =\optionalVariation{G}=V$. In the light of Rebolledo's theorem for locally square integrable martingales (see Appendix~\ref{sec:rebolledo} and also, \cite[Chapter II, p.~83]{andersen1997statistical}), Step I and Step II are sufficient to establish
   \begin{align}
  ( M(t_1),M(t_2),\ldots, M(t_l))   \ConvInDist  ( G(t_1),G(t_2),\ldots, G(t_l))  \text{ as } n \rightarrow \infty  \nonumber
   \end{align}
   for all $ t_1,t_2, \ldots, t_l \in \setTime{}$. Furthermore, since $\setTime{}$ is dense in $\setTime{0}$, we conclude $M   \ConvInDist G $  { in }  $D^{(3)}$ { as }  $n \rightarrow \infty$,
 and $ \predictableVariation{M}  $ and $ \optionalVariation{ M}  $ converge uniformly on compact subsets of $\setTime{0}$, in probability, to $V$.

 \noindent\paragraph{(Step III)  Convergence of the Fluctuation Process}
In keeping with the  Doob--Meyer decomposition given in Equation~\eqref{eq:doob-decomposition},
\begin{align}
	Y(t) = Y(0) + M(t) + \int_0^{t}  \sqrt{n} ( \frac{1}{n} \FOperator{\tilde{X}(s)}{X}  -  \HOperator{ x  (s)  , \vartheta   (s)  }{x }   ) \differential{s} \eqcomma  \nonumber
\end{align}
we  expect the following limit process
\begin{align}
	U(t) = U(0) + G(t) +  \int_{0}^{t} \nabla \HOperator{x(s), \vartheta(s) }{x}  U(s) \differential{s} \eqstop \label{eq:SIFCLT:limitU}
\end{align}
Indeed, define
\begin{align*}
	\Delta (t) \defeq & \int_{0}^{t}   \sqrt{n} \left( \frac{1}{n} \FOperator{\tilde{X}(s)}{X}  -  \HOperator{ x  (s)  , \vartheta   (s)  }{x }    - \frac{1}{  \sqrt{n}   }  \nabla \HOperator{x(s), \vartheta(s) }{x} \, Y(s)   \right)  \differential{s}  \\
	=  &  \int_{0}^{t}   \sqrt{n} \bigg( \frac{1}{n} \FOperator{\tilde{X}(s)}{X}  -  \HOperator{   \frac{1}{n}X(s)  , \theta   (s)  }{x }  + \HOperator{   \frac{1}{n}X(s)  , \theta   (s)  }{x }   \\ &{} -  \HOperator{ x  (s)  , \vartheta   (s)  }{x }
	 - \frac{1}{  \sqrt{n}   }  \nabla \HOperator{x(s), \vartheta(s) }{x} \, Y(s)   \bigg)  \differential{s} \eqstop
\end{align*}
Note that the strong law of large numbers  in Theorem~\ref{thm:LLN} establishes uniform convergence (in probability) of the  operators $ n^{-1} \FOperator{\tilde{X}(s)}{X}$ and $\HOperator{   \frac{1}{n}X(s)  , \theta   (s)  }{x }   $, and the latter operator is Lipschitz continuous on its domain (see \cite{jacobsen2016large}). In the light of Theorem~\ref{thm:LLN} and  \ref{itm:as3}, it follows from the Lipschitz continuity of various multinomial compensators $C_m^{k}$ introduced in the proof of Lemma~\ref{lemma:deterministic-limit} that $\lim_{n \rightarrow \infty } \sqrt{n} \left( \frac{1}{n} \FOperator{\tilde{X}(s)}{X}  -  \HOperator{   \frac{1}{n}X(s)  , \theta   (s)  }{x }  \right) = 0$.    Moreover, we have just shown $M   \ConvInDist G   \text{ in }  D^{(3)}$.  If $V$ remains finite on the entirety of $\setTime{0}$,  the matrix-valued function  $\nabla \HOperator{x(s), \vartheta(s) }{x} $ is continuous,  and $\lim_{n \rightarrow \infty} Y(0) = U(0) $, for some  nonrandom $U(0)$, then we have
 $\sup_{t \in \setTime{0}}  | \Delta (t) | \ConvInProb 0 $ following Theorem~\ref{thm:LLN}, and by application of the continuous mapping theorem, we conclude
 \begin{align}
 Y   \ConvInDist U   \text{ in }  D^{(3)} \text{ as }  n \rightarrow \infty \eqcomma \nonumber
 \end{align}
where the Gaussian semimartingale $U$ satisfies Equation~\eqref{eq:SIFCLT:limitU} with the Gaussian martingale $G$ being such that $\predictableVariation{G} =\optionalVariation{G}=V$. 
This completes the proof.
\end{proof}

\section{Applications}
\label{sec:applications}
Here, we consider some applications of our result. As we discuss these applications, we shall also  present some numerical and simulation results that are intended not only to provide  insights into the dynamics of the process, but also to serve as a verification of  our results.

\subsection{Percolation}
There is a connection between the stochastic SI model and the percolation theory known from statistical physics and developed  to study  the process of liquid  filtering (``percolating'')   through a porous medium. Classical equilibrium-mechanics studies its stationary behaviour and premises upon the axiom that  the  underlying quantum-mechanical laws are  designed so as to maximize the entropy. Stationary distribution of such a stochastic system is given by the Boltzmann ensemble. This classical treatment of the subject, however, does not explain the non-equilibrium behaviour of the dynamical system, \ie, when it is still in a transient phase. Consequently, the non-equilibrium behaviour of percolation has aroused much interest in  recent times. Some notable contributions include \cite{Hinrichsen20061,barato2009noneqPhase}. 
The standard treatment of percolation, both equilibrium and non-equilibrium, has been extended in another important direction concerning the structure of the porous medium. Traditionally it has been studied on lattices and grids. Of late, however, percolation on random graphs has also been considered 
 (\cite{baroni2015FPPrandomgraph,callaway2000network,van2010percolation}). Continuing  in this direction, we shall treat (non-equilibrium) percolation as a dynamical process on a configuration model random graph and study its behaviour over a finite time interval. 

One of the key  quantities of interest in the study of non-equilibrium percolation is the time evolution of the number of wetted sites (also called ``active'' vertices in the literature).   The correspondence between our stochastic SI model as described in Section~\ref{subsec:model} and non-equilibrium percolation  is visible  if we  treat the infected vertices as the ones wetted during the process of percolation. Accordingly, in this context, we give the process $X(t)$ appropriate new interpretation. The process $X_{\mathrm{S}}(t)$, for example, captures the number of  unwetted sites until time~$t$, and the process $X_{\mathrm{SI}}$, the number of channels (bonds) through which the liquid can percolate. In Figure~\ref{fig:percolation}, the  percolated component up to a given time (the wetted part of the graph) is shown in red. Having made the correspondence precise, we can apply Theorem~\ref{thm:FCLT} to approximate these quantities in the large graph limit.



 \begin{figure}[t!]
 	\centering
 	\begin{subfigure}[t]{\columnwidth}
 			\centering
 		\includegraphics[width=0.8\columnwidth]{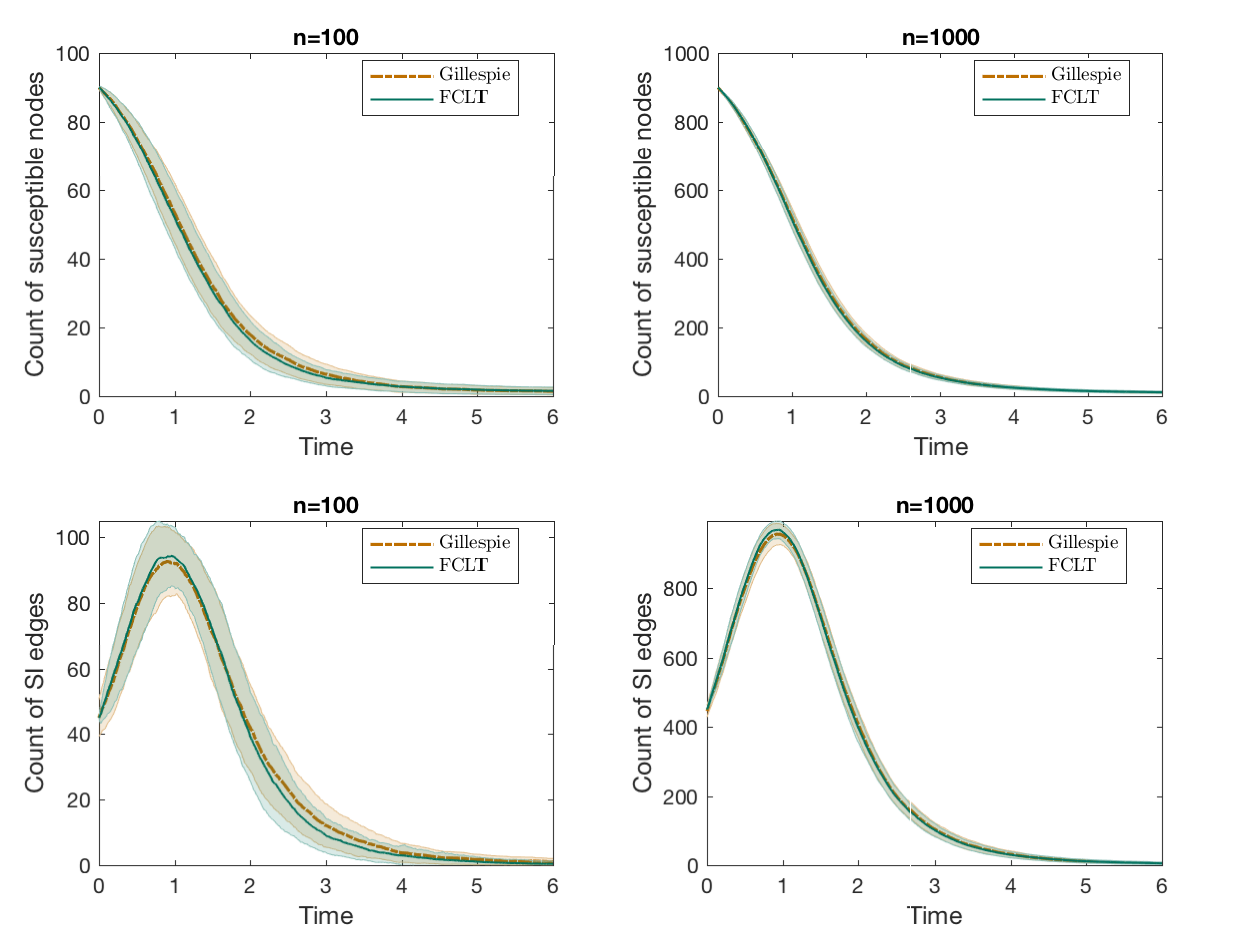}
 		\caption{Simulation setting: Poisson distribution with $\lambda = 5, \alpha_{\mathrm{S}}=0.9$, and $\beta=0.5$.}
 		\label{fig:Poisson_Errorbar_S_SI}
 	\end{subfigure}
 	\begin{subfigure}[t]{\columnwidth}
 		\centering
 		\includegraphics[width=0.8\columnwidth]{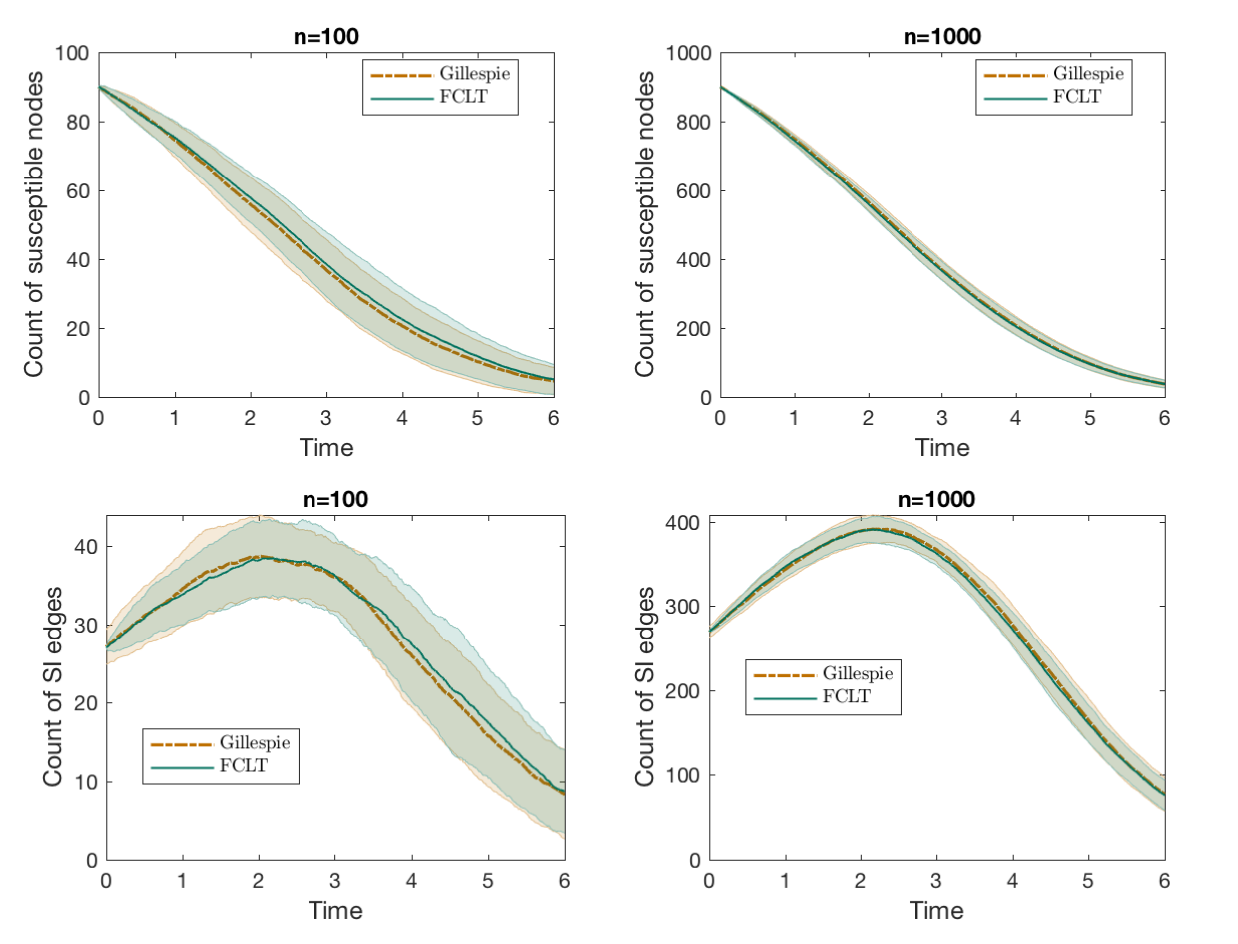}
 		\caption{Simulation setting: $r$-regular random graph with $r=3, \alpha_{\mathrm{S}}=0.9$, and $\beta=0.5$.}
 		\label{fig:RegularRandomGraph_Errorbar_S_SI}
 	\end{subfigure}
 	\caption{\label{fig:errorbars}%
 	Comparison of our diffusion approximation with simulation results obtained by Gillespie's algorithm. 
 	}

 \end{figure}

 \begin{figure}[t!]
 	\centering
 	\begin{subfigure}[t]{0.49\textwidth}
 	\includegraphics[width=\columnwidth]{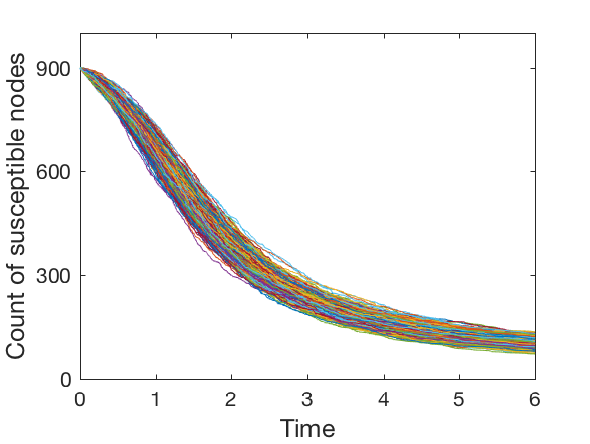}
 	\end{subfigure}
 	\begin{subfigure}[b]{0.49\textwidth}
 	\includegraphics[width=\columnwidth]{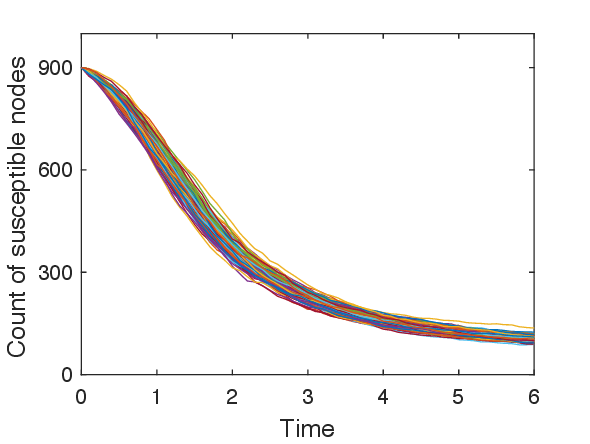}
 	\end{subfigure}

 	\begin{subfigure}[t]{0.49\textwidth}
 	\includegraphics[width=\columnwidth]{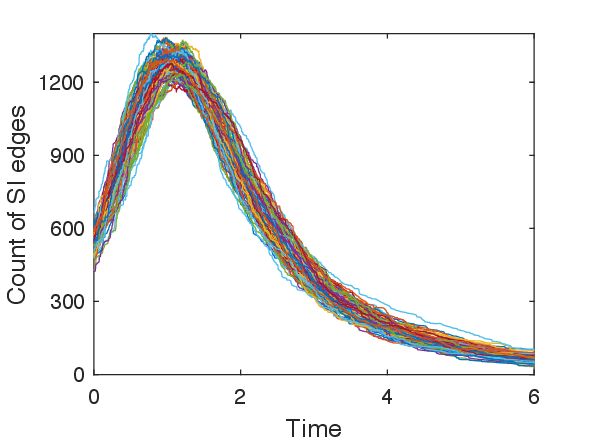}
     \end{subfigure}
 	\begin{subfigure}[b]{0.49\textwidth}
 	\includegraphics[width=\columnwidth]{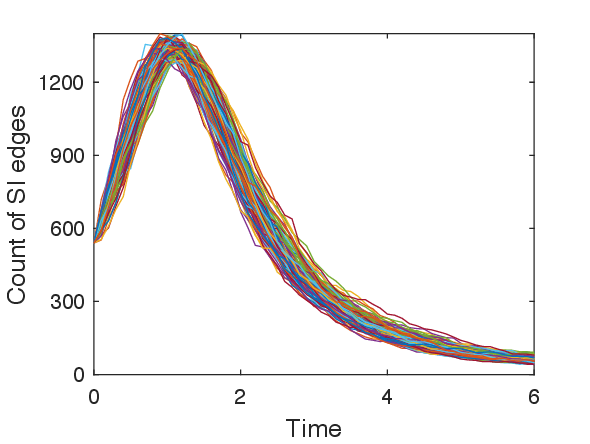}
     \end{subfigure}
 	\caption{\label{fig:NB_comparison_n10000} %
 		Comparison of simulated sample paths. \textbf{(Left)} Sample paths obtained through Gillespie's algorithm. \textbf{(Right)} Sample paths obtained through diffusion approximation. Simulation setting: $n=1000$, Negative Binomial distribution with $r=2, p=3/4$.}
 \end{figure}

\paragraph{Numerical Illustration}
In Figure~\ref{fig:errorbars}, we show some simulation results to check the accuracy of our scaling limit. 
We compare the expected sample paths of $X_{\mathrm{S}}$ and $X_{\mathrm{SI}}$ provided by Theorems~\ref{thm:LLN} and \ref{thm:FCLT}, with estimates
 obtained using simulations of the Gillespie's algorithm on  a CM graph. In particular, we considered a Poisson degree distribution in  Figure~\ref{fig:Poisson_Errorbar_S_SI} and a  $3$-regular random graph in Figure~\ref{fig:RegularRandomGraph_Errorbar_S_SI} (obtained by the CM construction with degree distribution~$p_k=\indicator{k=3}$). In Figure~\ref{fig:NB_comparison_n10000}, we compare the simulated sample paths of the true Gillespie dynamics and that corresponding to the diffusion approximation for Negative Binomial degree distribution. Figures~\ref{fig:Poisson_Errorbar_S_SI},  \ref{fig:RegularRandomGraph_Errorbar_S_SI} and \ref{fig:NB_comparison_n10000} show convincing accuracy of the diffusion approximation. In Figure~\ref{fig:ConfEllipse}, we show the time evolution of the correlation coefficient between the jumps of $X_{\mathrm{S}}$ and $X_{\mathrm{SI}}$, and also the expected sample path coupled with $95\%$-confidence ellipses in the space of  $X_{\mathrm{S}}$ and $X_{\mathrm{SI}}$. The orientation of the confidence ellipses is calculated as the angle of the eigenvector corresponding to the largest eigenvalue of the covariance matrix towards the $x$-axis. To be specific: the orientation is given by $\omega \defeq \arctan{e_2/e_1}$, where $e \defeq (e_1, e_2)$ is the eigenvector corresponding to the largest eigenvalue of the covariance matrix. The lengths of the major and the minor axes are determined using the eigenvalues and by looking up the probability table of chi squared distribution (recall that squared normal variates follow a chi squared distribution). The Matlab script used to draw the ellipses is based on a script provided by \cite{confidenceEllipses}.


   \begin{figure}[h!]
   	\centering
   	\begin{subfigure}[b]{1.0\columnwidth}
   		\includegraphics[width=\columnwidth]{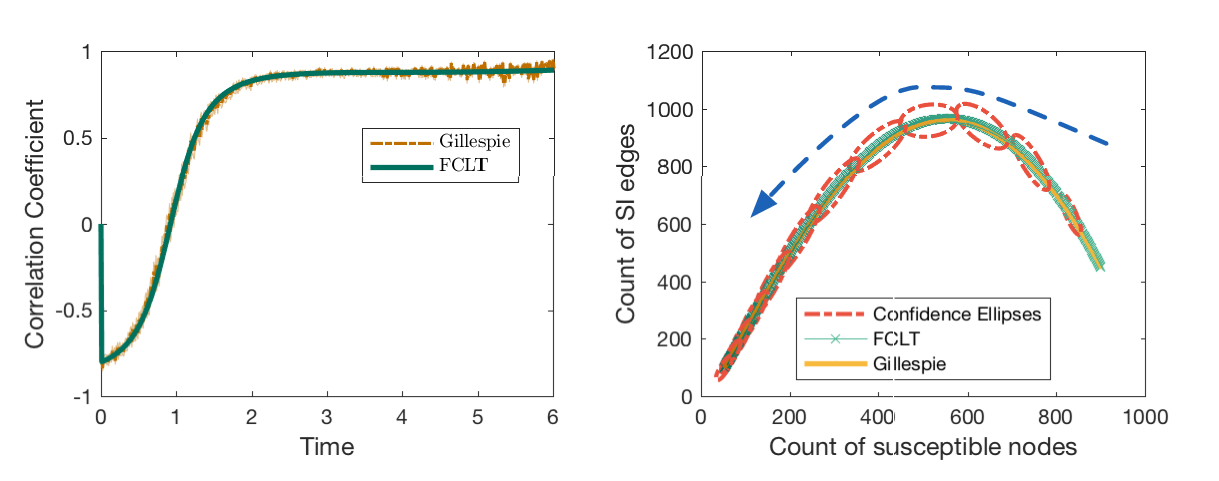}
   		\label{fig:ConfEllipsoidsPoiss}
   	\end{subfigure}
   	\hfill
   	\begin{subfigure}[b]{1.0\columnwidth}
   		\centering
   		\includegraphics[width=\columnwidth]{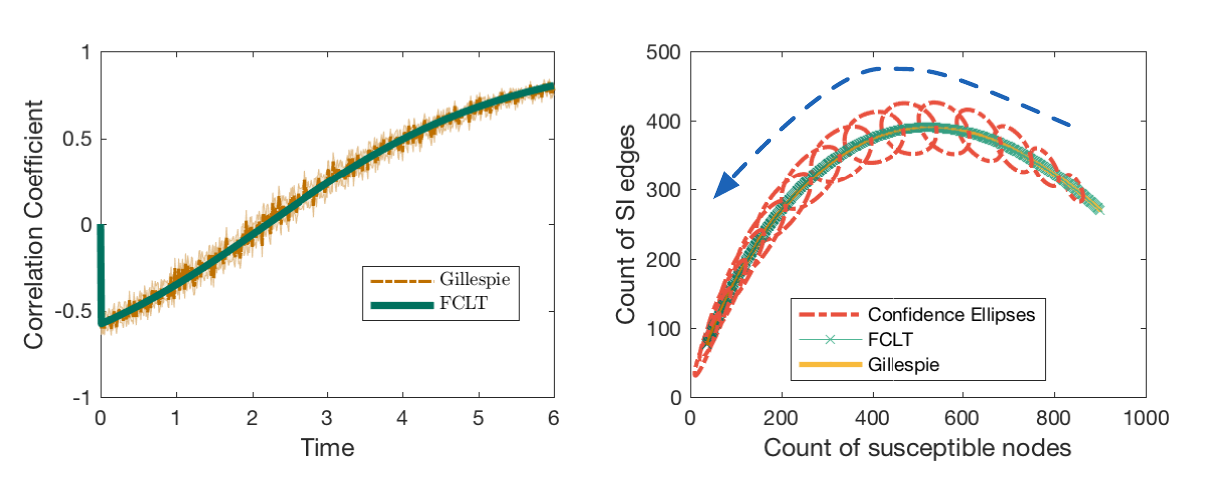}
   		\label{fig:ConfEllipsoidsRandReg}
   	\end{subfigure}
   	\caption{The figures on the left depict the time evolution of the correlation coefficient between jumps of $X_{\mathrm{S}}$ and $X_{\mathrm{SI}}$ as estimated from numerical simulations (via Gillespie's algorithm) pitted against theoretical values computed from the functional central limit theorem (Theorem~\ref{thm:FCLT}). The figures on the right show the expected sample path in the space of $X_{\mathrm{S}}$ and $X_{\mathrm{SI}}$. The two lines
   		correspond to numerical simulation and theoretical values. 
   		The dotted ellipses are the $95\%$-confidence ellipses based on  estimates of covariances between $X_{\mathrm{S}}$ and $X_{\mathrm{SI}}$ from the diffusion approximation created using a Matlab script provided by \cite{confidenceEllipses}.  The arrows indicate the time direction.  \textbf{(Above)} Poisson distribution with mean $5$. \textbf{(Below)} $r$-regular random graph with $r=3$. In both cases, $n=1000,  \alpha_{\mathrm{S}}=0.9$,  and $\beta=0.5$. }
   	\label{fig:ConfEllipse}
   \end{figure}

\begin{figure}[h!]
	\centering
	\includegraphics[width=1.0\columnwidth]{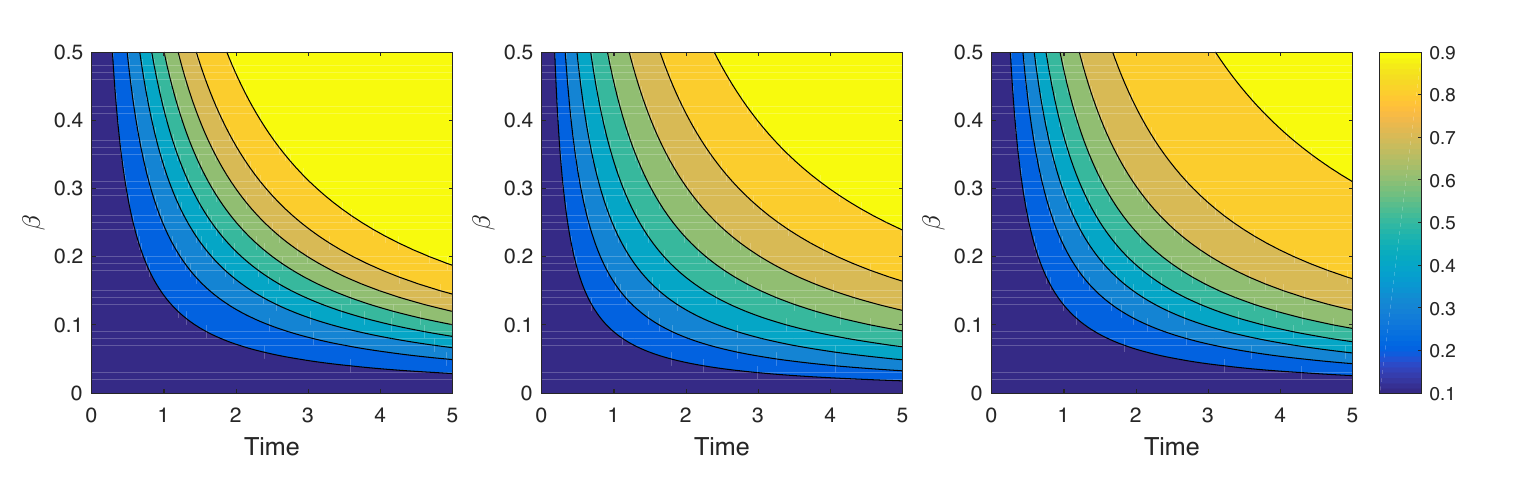}
	\caption{Comparison of percolation profiles of three degree distributions having the same mean. \textbf{(Left)} Poisson distribution with mean $6$. \textbf{(Middle)} A heterogeneous population with degree distribution $p_k \defeq  0.7 \times \indicator{k=1}  +  0.2 \times \indicator{k=4} +0.1 \times \indicator{k=45}$. Such a degree distribution  represents a population segregated into three classes. Weak vertices constitute the biggest class, followed by  medium strength vertices and then strong vertices. \textbf{(Right)} Negative Binomial distribution with parameters $r=2, p={3}/{4} $. The figures show time evolution of the fraction of vertices on the percolated component for varying infection rates~$\beta$. We assume the initial fraction of infected vertices is $0.1$ in all three cases. The yellow region in each of the plots corresponds to the terminal state. Questions such as whether the system with an infection rate~$\beta$ ``percolates'' are immediately settled by drawing a horizontal line and checking whether the lines passes through the colour corresponding to a pre-specified level. It is worth noting that the limiting percentages of vertices in the giant components eventually getting infected are also quite different for the three degree distributions. While it is around $0.91$ (approximately) for the Negative Binomial distribution and the hand-picked degree distribution in the middle, the percentage is relatively high for the Poisson distribution (around $0.99$ approximately).  }
	\label{fig:Isolines}
\end{figure}

The existence of a giant component and the proportion of vertices on the giant component play an important role in percolation theory, especially from an equilibrium point of view in statistical mechanics. The case of a degree distribution~$\{p_k \}_{k \in \setOfNonnegativeIntegers}$ such that $\sum_{k \in \setOfNonnegativeIntegers} k^2 p_k = 2 \sum_{k \in \setOfNonnegativeIntegers} k p_k$ and $p_1=0$ (or, equivalently $p_0 + p_2 = 1$)  is a curious one in that  quite different behaviours of the giant component are observable for such a degree distribution. Please refer to \cite{Janson2009newTechnique} for examples of such behaviours. Barring this exceptional case, in
 the light of \ref{itm:as3}, the condition for existence of a giant component is  satisfied (see  \cite{Molloy1995critical}) for our stochastic SI model in the traditional sense. To be precise, setting $\alpha_{\mathrm{S}}=1$ and taking  asymptotic limit in time, one finds the fraction of vertices on the giant component to be $1- \psi(\theta_\infty)$, where $\theta_\infty>0$ is the solution of $\partial\psi(1) \theta_\infty= \partial \psi( \theta_\infty)$ (see \cite{janson2014law,molloy1998giant}). However, as mentioned earlier, we take a non-equilibrium point of view and concern ourselves with the time evolution of the fraction of vertices on the infected part of the graph, the ``percolated component'' . As a by-product of the scaling limits in Theorem~\ref{thm:LLN} and Theorem~\ref{thm:FCLT}, the variable $\theta$ defined in Equation~\eqref{eq:theta_defn} gives us a tool to approximate the proportion of susceptible individuals in the population (and hence, the proportion of infected vertices as well). We expect the fraction of infected individuals to converge in probability to $1- \alpha_{\mathrm{S}} \psi(\vartheta)$ as $n \rightarrow \infty$, $\vartheta$ being the scaling limit of $\theta$.
A fixed time interval~$\setTime{0}$ enables us to look for critical values in the space of the infection rate~$\beta >0$. This allows us to  decide whether the system ``percolates'' in the sense that the fraction of vertices on the percolated component achieves a value greater than a pre-specified one (usually  close to unity) by time~$T$.
Using different colours in Figure~\ref{fig:Isolines}, we depict  the  fraction of vertices on the percolated component as a function of both time and the infection rate~$\beta$ (let us call such a figure a percolation profile) for three degree distributions with the same mean. Questions such as whether the system with an infection rate~$\beta$ ``percolates'' are immediately settled by drawing a horizontal line and checking whether the line passes through the colour corresponding to the pre-specified level. See Figure~\ref{fig:perc_degree_cost} for another comparative view, highlighting the need to take into account higher moments of the degree distribution.

\begin{figure}[h!]
	\centering
	\includegraphics[width=\columnwidth]{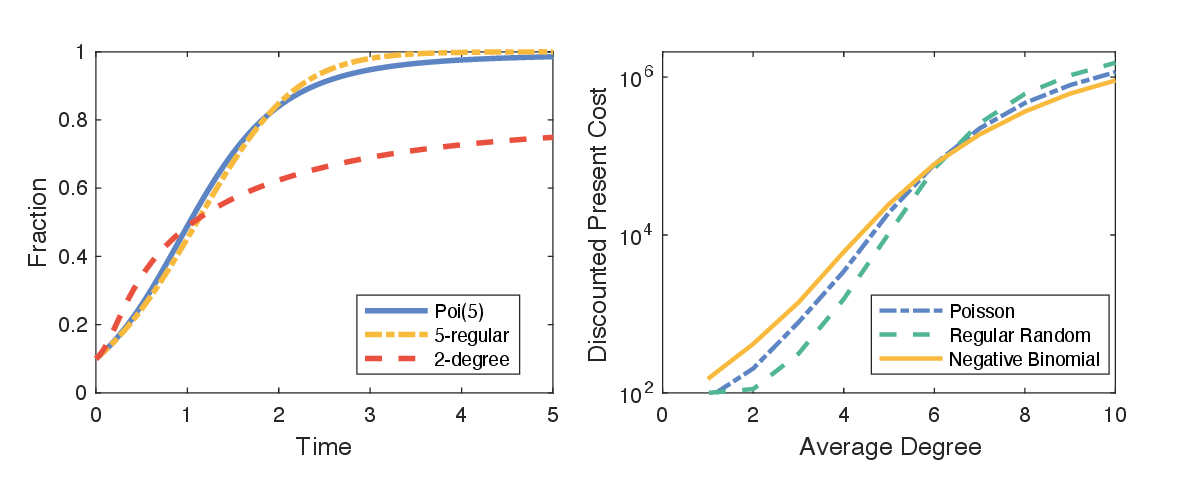}
	\caption{\textbf{(Left)} Comparison of the time evolution of  the fraction of vertices on the percolated component for different degree distributions with the same mean.
 The $2$-degree distribution in the plot refers to the degree distribution $p_k = 0.5 \times  \indicator{k=1} + 0.5 \times  \indicator{k=9}$, where none of the vertices have degree~$5$ yet the average degree is $5$. This presents a pathological case and highlights the need to take into account higher moments of the degree distribution. \textbf{(Right)} Comparison of discounted cost against increasing average degree for different degree distributions. With increasing average degree the graphs lose sparsity and facilitate spread of computer virus. Therefore, they incur higher cost. When the average degree is very small, regular random graphs seem favourable compared to random graphs with negative binomial distributed degrees. The costs are computed with $n=1000, \gamma=1$.}
	\label{fig:perc_degree_cost}
\end{figure}

\subsection{The strange case of Poisson-type degree distributions and the exactness of the pair approximation}
 We consider a  particular class of degree distributions called  ``Poisson-type'' (PT) by \cite{jacobsen2016large}. A degree distribution with PGF $\psi$ is called PT if $\kappa(\vartheta )$, defined in Equation~\eqref{eq:kappa_defn},  is a constant, \ie, $\kappa (\vartheta) =\kappa$ for some $\kappa \in \setOfReals$ (or equivalently, $\partial{\psi (\vartheta)}= \partial{\psi (1)} \left(\psi (\vartheta)\right)^{\kappa }$). As a consequence, the operators defined in Equation~\eqref{eq:diffOperatorDefn} are also constants, and satisfy
\begin{align}
	\diffOperator{\psi\circ \vartheta}{r} = \prod_{i=1}^{r-1} \left(  i \kappa - i +1  \right)  = \left( (r-1)\kappa -r  +2 \right) \, \diffOperator{\psi\circ\vartheta}{r-1}   \eqcomma \nonumber
\end{align}
with  $\diffOperator{\psi\circ \vartheta}{0} =1$. The PT class includes Poisson ($\kappa=1$, irrespective of the mean of the distribution), degenerate distribution ($r$-regular random graphs, $\kappa =\frac{r-1}{r} <1 $), binomial ($\kappa = \frac{N-1}{N} <1$, independent of $p$ for $\mathrm{Bin}(N,p)$), negative binomial ($\kappa = \frac{r+1}{r}>1$, independent of $p$ for $\mathrm{NB}(r,p)$) degree distributions. The PT class  is particularly peculiar in that it totally decouples the vector $x = (x_{\mathrm{S}}, x_{\mathrm{SI}}, x_{\mathrm{SS}})$, and the matrix-valued function~$V$  from the auxiliary variable $\vartheta$ so that an autonomous system of  ODEs can be obtained for $x$ and $V$, rendering $\vartheta$   redundant. This allows for great simplification in the limiting equations. Define  $\GOperator{x}{} \defeq (\GOperator{x }{S},\GOperator{x}{SI},\GOperator{x}{SS} )$  as
\begin{align}
\begin{aligned}
\GOperator{x}{S} & \defeq  -\beta x_{\mathrm{SI}}   \eqcomma \\
\GOperator{x }{SI} & \defeq   \beta   \kappa \frac{x_{\mathrm{SI}}}{x_{\mathrm{S}}} (x_{\mathrm{SS}}- x_{\mathrm{SI}}) - \beta  x_{\mathrm{SI}}   \eqcomma \\
\GOperator{x }{SS} & \defeq  -2 \beta \kappa  \frac{ x_{\mathrm{SI}} x_{\mathrm{SS}} }{x_{\mathrm{S}}} \eqstop
\end{aligned}
\label{eq:pair_approx_operators_defn}
\end{align}
Plugging  $\diffOperator{\psi\circ \vartheta}{2} =\kappa$, and $\diffOperator{\psi\circ\vartheta}{3} =\kappa (2 \kappa -1) $ in Equation~\eqref{eq:vdefinition}, the matrix-valued function $V$ is  entirely determined by $x$. The following  is immediate.

\begin{MyCorollary}[Scaling limit under PT distributions]
Assume \ref{itm:as1}, \ref{itm:as2}, and  \ref{itm:as3} for a configuration model graph $\mathcal{G}(\psi, n)$ with
  $\partial\psi (\vartheta) =  \partial\psi (1) (\psi (\vartheta))^\kappa $ for some $\kappa \in \setOfReals$. Then, the following law of large numbers holds
  \begin{align*}
  	\sup_{0<t \leq T}  \norm{ n^{-1} X(t) -x(t)  } \ConvInProb 0 \eqcomma
  \end{align*}
where $x$ is the solution of $ x(t) = x(0) + \int_{0}^{t} \GOperator{x(s)}{} \differential{s} $ with $x(0) = \alpha$. Moreover, the fluctuation process $Y$ defined in Equation~\eqref{eq:fluctuation_defn} converges weakly to a continuous Gaussian vector semimartingale $U$ satisfying
\begin{align*}
	U(t) = U(0) + G(t) +  \int_{0}^{t}  \nabla \GOperator{x(s)}{} U(s) \differential{s}  \eqcomma
\end{align*}
where $G$ is a  Gaussian vector martingale such that $\predictableVariation{G} = \optionalVariation{G} = V$.
\label{thm:PT_Limits}
\end{MyCorollary}

In fact, one can obtain a smaller system by expressing  $x_{\mathrm{SI}}$ and $x_{\mathrm{SS}}$ explicitly as a function of $x_{\mathrm{S}}$ (see \cite{jacobsen2016large}). This is remarkable because, under the PT class, the graph structure impacts the scaling limits \emph{only} through two summary statistics of $\psi$, namely the mean $\partial \psi(1)$ and $\kappa=\diffOperator{\psi(1)}{2} $. Recall that
$\kappa$, as defined in Equation~\ref{eq:kappa_defn}, is the limiting ratio of the average excess degree of a susceptible vertex chosen at random as a neighbour of an infected vertex, to the average degree of a susceptible vertex. Therefore it is, in general, dependent on time through $\vartheta$. Under the PT class, this ratio remains constant throughout the entire course of time~$\setTime{0}$. Moreover, the mean  $\partial \psi(1)$ only impacts the initial condition $x(0)=\alpha$ through Equation~\eqref{eq:alpha_defns}. The dynamics of the limiting process are then dictated by the constant $\kappa$ under the PT class.

Now we revisit the correlation equations approach of \cite{Rand2009Correlation} from ecology literature to study the dynamics of  counts of singles, pairs, triples, and quadruples of the form $A, AB, ABC, ABCD$, where $A,B,C,D \in \{ \mathrm{S}, \mathrm{I}\}$.  Following \cite{Rand2009Correlation}, we use the notation  $\concentration{\cdot}$ to denote the count.   In this  mean-field approach, the dynamics of singles are described by that of pairs; dynamics  of pairs, by triples, and so on. In this context, pair approximation refers to approximating the count of triples by pairs in the following way
\begin{align*}
\concentration{ABC} \approx \kappa \frac{\concentration{AB} \concentration{BC} }{\concentration{B}} \eqcomma
\end{align*}
and closing the system at the level of pairs (also known as pair closure). In order to draw an analogy, we divide the counts by $n$, and use the same notation for the scaled counts. We also set  the same initial condition  $(\concentration{\mathrm{S}},\concentration{\mathrm{SI}},\concentration{\mathrm{SS} }) = \alpha$ at $t = 0$. The pair approximation then yields a system of ODEs for $(\concentration{\mathrm{S}},\concentration{\mathrm{SI}},\concentration{\mathrm{SS}})$ that {exactly} matches the limiting ODEs for $n^{-1}X$, \ie,

\begin{align}
	\label{eq:pair_approx}
\frac{\differential}{  \differential{t} }	   { (\concentration{\mathrm{S}}, \concentration{\mathrm{SI}}, \concentration{\mathrm{SS}})  } = \GOperator{ (\concentration{\mathrm{S}}, \concentration{\mathrm{SI}}, \concentration{\mathrm{SS}})  }{}  \eqstop
\end{align}

Therefore, under the PT class, the pair approximation is \emph{exact} in the sense that it correctly estimates the limiting means of various counts.  By virtue of  Corollary~\ref{thm:PT_Limits},  our FCLT further enables it to correctly estimate \emph{all} other higher limiting moments, because $V$ is now entirely determined by the solution of Equation~\eqref{eq:pair_approx}. As the PT class is quite big, our FCLT thus greatly enhances the usefulness of the pair approximation.

\subsection{Spread of Computer Viruses}
Epidemic models have been used in the context of spread of computer virus for some years now. The correspondence between our model and the application area under consideration is apparent without requiring much change in nomenclature. Early works in this direction did not take into account the inherent graph structure and assumed ``homogeneous mixing'' in some sense. Recent works, however, duly studied it on more realistic computer networks, which are often modelled as random graphs, without the assumption of ``homogeneous mixing''. Lelarge, for example, based much of his work on classical Erd\H{o}s   R\'enyi  random graphs  and configuration models (see, \eg, \cite{lelarge2012diffusion}).  Interested readers are referred to
 \cite{Kephart1993virus,lelarge2012diffusion,Wierman2004virus}   for an overview of relevant literature.  Applying our results, we can approximate the number of virus-affected computers over time and the edges of different types. Additionally, one might be  interested in estimating some ``cost''  involving the count variables in a linear or non-linear fashion. For instance, if the cost function is polynomial in the count variables, the mixed moments of various orders 
 can be approximated by means of Theorem~\ref{thm:FCLT}.
To illustrate the concept using a simple example, we assume an exponentiated form for the  incurred cost  to emphasize the severity of a computer being virus-affected. We can then compute  time-discounted expected incurred cost and study how it behaves with decreasing sparsity of the underlying graph. To this end,  define
\begin{equation}
\begin{aligned}
I(t)  \defeq {} & \myExp{c X_I (t)  }   \eqcomma  \\
C_\psi    \defeq {} & \Eof{   \int_{\setTime{0}  } \myExp{ -  \gamma  t }   I(t)  \differential{t} }  = \int_{\setTime{0}  } \myExp{ - \gamma t } \Eof{ I(t)  }  \differential{t}  \eqcomma
\end{aligned}
\label{eq:costdefn}
\end{equation}
 where $c>0$ and $\gamma>0$ are constants. In Figure~\ref{fig:perc_degree_cost}, we plot the discounted cost~$C_\psi $ against an increasing average degree of the underlying graph, engendering decreasing sparsity. When the average degree is very small, regular random graphs seem favourable compared to random graphs with negative binomial distributed degrees.

\section{Conclusion and Future Work}
\label{sec:conclusion}
We conclude the paper with a brief literature review and  a short discussion afterwards. In summary, we study the susceptible-infected (SI) model by formulating a stochastic process on configuration model random graphs. Even though this 
is a simple infection model, the Markovian process on the entirety of the random graph suffers state space explosion as $n$ grows to infinity. Analysis of the non-Markovian aggregate process also becomes complicated. Therefore, scaling limits retaining key features of the network are generally  of interest.

\subsection{Related Works}
\label{subsec:relatedWork}
In the recent scientific literature one  comes across a host of dynamical processes arising from epidemiology (\cite{brauer2003mathEpidemiology,newman2002epidemicOnNetworks,House2014dynamics}), 
statistical physics (\cite{bollobas2006percolation}), and computer science (\cite{lelarge2012diffusion,Kephart1993virus,Wierman2004virus}).  These dynamical processes are often similar and hence, lend themselves to application across disciplines  (\cite{pastor2001epidemicB}). In pursuit of scaling limits, much of the research 
has been inspired by the mean-field approach from statistical physics. 
For instance, the authors in \cite{pastor2001epidemicB} 
study  epidemic dynamics on scale-free networks of \cite{barabasi1999emergence}. The majority of work in this direction aims to obtain limiting  ordinary differential equations (ODEs) for the proportions of individuals in different compartments of the population.
Notwithstanding the simplicity of these methods,  the scaling limits presented are  approximate and lack mathematical rigour  by design. See \cite[Chapter 1]{durrett2007randomGraphDynamics} for a critique.
The standard mean-field method was further improved by use of pair-approximation in \cite{gleeson2011pair}. Several other improvements  yielding less approximate results have been proposed afterwards. A detailed account is presented in \cite{vespignani2008dynamicalProcesses}. 
Some of these approximate results have been followed up by probabilists and  improved upon (\cite{durrett2007randomGraphDynamics,durrett2010some,chatterjee2009contact}).

A related line of research concerns the first-passage percolation (FPP) on random graphs (\cite{baroni2015FPPrandomgraph,callaway2000network,van2010percolation,Bhamidi2017FPPSparse,Bhamidi2010FPPmean,Bhamidi2011FPPonER}). Given a graph $G = (V, E)$ with $n$ vertices, we assign random weights $W_e$ to  edges $e \in E$. We assume the weights are independent and identically distributed. For two vertices $i, j \in V$, the \emph{passage time} from $i$ to $j$ is defined as
\begin{align*}
\mathsf{c}_{n}(i,j) \defeq \min_{\mathsf{p}(i,j) } \sum_{e \in \mathsf{p}(i,j) } W_e \eqcomma
\end{align*}
where the minimum is taken over all paths $\mathsf{p}(i,j) $  from the vertex $i$ to the vertex $j$. By convention, $\mathsf{c}_{n}(i,i) \defeq 0$, and $\mathsf{c}_{n}(i,j) \defeq \infty$ if there is no path from the vertex $i$ to the vertex $j$. For two typical vertices $i$ and $j$, chosen uniformly at random, one  looks for a sequence of reals $ \tilde{\mathsf{c}}_{n}$ such that
\begin{align*}
 \left(  \mathsf{c}_{n} - \tilde{\mathsf{c}}_{n} \right) \ConvInDist Q \eqcomma
\end{align*}
for some limiting random variable $Q$ with usually continuous support. Sometimes scaling limit of the number of edges on the shortest path between two typical vertices is also studied along with the passage times. That is, if $\mathsf{h}_{n}$ denotes the number of edges on the shortest path between two typical vertices (chosen uniformly at random), we seek a  sequence $\tilde{\mathsf{h}}_{n}$ of scaling constants such that
\begin{align*}
\left(  \mathsf{h}_{n} - \tilde{\mathsf{h}}_{n} \right) \ConvInDist \tilde{Q} \eqcomma
\end{align*}
for some limiting random variable $\tilde{Q}$. The random variable $\mathsf{h}_{n}$ is often called the typical hop-count.  One of the important questions in the study of FPP is regarding the growth rate of the sequence of the scaling constants $\tilde{\mathsf{c}}_{n}$, and $\tilde{\mathsf{h}}_{n}$, \eg, whether they are of order $\log (n)$. Universality results are also important in the study of FPPs and form a substantial body of literature. While the hop-count $ \mathsf{h}_{n} $ measures typical distances in the random graph, the passage time $\mathsf{c}_{n}$ can be interpreted as the typical (minimum) amount of time required for an infectious disease to transmit from an infected vertex to a susceptible vertex.  As a result, limit theorems for the passage time $\mathsf{c}_{n}$  provide a complementary view to our FCLT for the SI process on CM random graphs as demonstrated by the numerical results in Section~\ref{sec:applications}.

In an epidemiological context,  limit theorems for a discrete-time random graph epidemic model were derived in  \cite{hakan_andersson1998} under rather restrictive assumptions 
such as finiteness of a $(4+\delta)$-th moment of the degree distribution, for some $\delta >0$.   The work of Erik Volz in
\cite{Volz2008} 
presented scaling limits  for susceptible-infected-removed (SIR) model on random graphs in the form of ODEs. 
The authors in \cite{decreusefond2012large} later proved 
Volz's  results rigorously by summarising the epidemic process on configuration model random graphs into some measure-valued equations. Several similar  laws of large numbers-type scaling limits under varying sets of technical assumptions surfaced afterwards. For example,  uniformly bounded degrees were assumed in \cite{bohman2012sir,barbour2013approximating}. The authors in \cite{janson2014law} assume degree of a randomly chosen susceptible vertex to be uniformly integrable and the maximum degree of  initially infected vertices to be $\smallO{n}$. 
The work in \cite{BALL2002SIR} studies a variant of the standard compartmental SIR with notions of local (within households, for example) and global contacts, and uses a branching process approximation to derive threshold behaviour and final outcome in the event of a global epidemic.
 Recently a  law of large numbers 
for the stochastic SIR process on a multilayer configuration model was derived in  \cite{jacobsen2016large}   assuming finiteness of the second moment of the underlying degree distribution.

%
%


 Although a number of scaling limits in the form of laws of large numbers have surfaced over the years, appropriate diffusion approximations are not yet fully explored. 
 Our FCLT attempts to 
 complement the laws of large numbers  already available in the literature.
In particular, our FCLT lends itself as an approximating tool in applications where laws of large numbers are inadequate, \eg, in situations involving higher order moments.

In our present work, we have disregarded ``recovery'' of  the infected vertices. The reason behind this exclusion is our inability to evaluate the neighbourhood distribution of an infected vertex in the presence of spontaneous recovery of its neighbours.
One difficulty is that, unlike the susceptible vertices (of a given degree) that are  untouched by the process of infection and hence, receive identically distributed neighbourhoods upon uniformly-at-random matching of half-edges, the infected vertices are not identically distributed because they already possess partially formed neighbourhoods consisting of infected and recovered neighbours. This corresponds to the part of the graph that has already been revealed up to a given time. Recall the  construction of the configuration model random graph where the graph is  dynamically revealed as  infection spreads (see Section~\ref{sec:FCLT}). As a result, the hypergeometric argument as mentioned in Lemma~\ref{remark:susceptibleNbd}  seems  inadequate. For the purpose of obtaining a law of large numbers, we can circumvent this difficulty by suitably bounding the jump sizes of different martingales arising in the proof  by the degrees of the vertices concerned. Therefore, we actually do not need the exact neighbourhood distribution of an infected individual for deriving laws of large numbers. However, to establish an FCLT, one needs to find the limit of the quadratic covariation process that would involve the task of approximating quantities such as $\sum_{k \in \setOfNonnegativeIntegers} \sum_{i \in I_k} X_{\mathrm{IS},i}^2$, where $I_k$ is the collection of degree-$k$ vertices that are infected and $X_{\mathrm{IS},i}$ is the number of 
susceptible neighbours of an infected individual of degree~$k$. We suspect an elaborate bookkeeping of the infection spreading process would be necessary to approximate such quantities. We have not been able to find a simple workaround so far and intend to pursue this problem in the near future.

\appendix

%
\section{Hypergeometric  Moments}
\label{sec:hypergeometric-moments}
Here, we compute various (conditional) moments that are useful for our derivations. Let us use the shorthand notation $$ \varsigma  (n_{\mathrm{SI}} ; X_{\mathrm{SI}}, X_{\mathrm{S} \Bigcdot}, k   ) \defeq 	\probOf{   X_{\mathrm{SI},i} = n_{\mathrm{SI}}, X_{\mathrm{SS} ,i}= k- n_{\mathrm{SI}} \mid \history{t} } \eqcomma  $$
conditional on the process history as given in Lemma~\ref{remark:susceptibleNbd}. The following moments are then computed keeping  Lemma~\ref{remark:susceptibleNbd} in mind.

In a straightforward fashion, we get for $i \in S_k$,
\begin{align*}
	\Eof{    \NPermuteR{X_{\mathrm{SI},i}}{3} \mid \history{t} } ={} & \sum_{n_{\mathrm{SI}}} \NPermuteR{n_{\mathrm{SI}}}{3} \frac{  \binom{X_{\mathrm{SI}}}{n_{\mathrm{SI}}} \binom{ X_{\mathrm{S} \Bigcdot } - X_{\mathrm{SI}}}{k-n_{\mathrm{SI}}  } }{   \binom{X_{\mathrm{S} \Bigcdot } }{k} }   \\
	={} & \sum_{n_{\mathrm{SI}} } \NPermuteR{n_{\mathrm{SI}}}{3} \frac{    \frac{ \NPermuteR{X_{\mathrm{SI}}}{3} }{  \NPermuteR{n_{\mathrm{SI}}}{3}  }  \binom{X_{\mathrm{SI}} -3 }{n_{\mathrm{SI}}-3 }   \binom{ (X_{\mathrm{S} \Bigcdot } -3) - (X_{\mathrm{SI}}  -3) }{  (k-3) -(n_{\mathrm{SI}}-3)  } }{  \frac{ \NPermuteR{X_{\mathrm{S} \Bigcdot }}{3} }{  \NPermuteR{k}{3}  } \binom{X_{\mathrm{S} \Bigcdot } -3 }{k-3} }    \\
	={} & \frac{   \NPermuteR{k}{3} \NPermuteR{X_{\mathrm{SI}}}{3}  }{\NPermuteR{X_{\mathrm{S} \Bigcdot }}{3} }  \sum_{n_{\mathrm{SI}}}  \varsigma  (n_{\mathrm{SI}} ; X_{\mathrm{SI}}-3,  X_{\mathrm{S} \Bigcdot}-3, k -3  )  \\
	={}& \frac{   \NPermuteR{k}{3} \NPermuteR{X_{\mathrm{SI}}}{3}  }{\NPermuteR{X_{\mathrm{S} \Bigcdot }}{3} }  \eqstop
\end{align*}
Similarly, we can derive for $i \in S_k$,
\begin{align*}
	\Eof{    \NPermuteR{X_{\mathrm{SI},i}}{2} \mid \history{t} } ={} & \sum_{n_{\mathrm{SI}} } \NPermuteR{n_{\mathrm{SI}}}{2} \frac{  \binom{X_{\mathrm{SI}} }{n_{\mathrm{SI}}}   \binom{ X_{\mathrm{S} \Bigcdot } - X_{\mathrm{SI}}  }{k-n_{\mathrm{SI}}  } }{   \binom{X_{\mathrm{S} \Bigcdot } (t) }{k} }
	={} \frac{   \NPermuteR{k}{2} \NPermuteR{X_{\mathrm{SI}}}{2}  }{\NPermuteR{X_{\mathrm{S} \Bigcdot }}{2} }  \eqcomma
\end{align*}
whence we get
\begin{align*}
	\Eof{   X_{\mathrm{SI},i}^3 \mid \history{t} } ={} & \Eof{    \NPermuteR{X_{\mathrm{SI}, i}}{3} \mid \history{t} } + 3 \Eof{    \NPermuteR{X_{\mathrm{SI},i}}{2} \mid \history{t} } + \Eof{  X_{\mathrm{SI},i} \mid \history{t} } \\
	={} &  \frac{   \NPermuteR{k}{3} \NPermuteR{X_{\mathrm{SI}}}{3}  }{\NPermuteR{X_{\mathrm{S} \Bigcdot }}{3} }  + 3  \frac{   \NPermuteR{k}{2} \NPermuteR{X_{\mathrm{SI}}}{2}  }{\NPermuteR{X_{\mathrm{S} \Bigcdot }}{2} }  + k  \frac{ X_{\mathrm{SI}} }{X_{\mathrm{S} \Bigcdot } }  \eqstop
\end{align*}
Proceeding in a similar fashion, for $i \in S_k$,
\begin{align*}
	\Eof{  X_{\mathrm{SI},i}  \NPermuteR{X_{\mathrm{SS},i}}{2} \mid \history{t} }  ={} & \sum_{ 0 \leq n_{\mathrm{SI}}  + n_{\mathrm{SS}}=k } n_{\mathrm{SI}} \NPermuteR{n_{\mathrm{SS}}}{2} \frac{  \binom{X_{\mathrm{SI}} }{n_{\mathrm{SI}}}    \binom{ X_{\mathrm{S} \Bigcdot }  - X_{\mathrm{SI}}  }{n_{\mathrm{SS}} } }{   \binom{X_{\mathrm{S} \Bigcdot }  }{k} }   \\
	={}& \sum_{ 0 \leq n_{\mathrm{SI}}  + n_{\mathrm{SS}}=k   } n_{\mathrm{SI}} \NPermuteR{n_{\mathrm{SS}}}{2} \frac{    \frac{X_{\mathrm{SI}} }{  n_{\mathrm{SI}}  }             \binom{X_{\mathrm{SI}} -1 }{n_{\mathrm{SI}}-1 }     \frac{  \NPermuteR{X_{\mathrm{SS}}}{2}   }{  \NPermuteR{n_{\mathrm{SS}}}{2}}         \binom{ X_{\mathrm{S} \Bigcdot } - X_{\mathrm{SI}}  -2  }{k-n_{\mathrm{SI}} -2  } }{  \frac{ \NPermuteR{X_{\mathrm{S} \Bigcdot }}{3} }{  \NPermuteR{k}{3}  } \binom{X_{\mathrm{S} \Bigcdot } -3 }{k-3} }    \\
	={}& \frac{   \NPermuteR{k}{3} X_{\mathrm{SI}} \NPermuteR{X_{\mathrm{SS}}}{2}  }{\NPermuteR{X_{\mathrm{S} \Bigcdot }}{3} }  \sum_{n_{\mathrm{SI}}  }  \varsigma  (n_{\mathrm{SI}}, ; X_{\mathrm{SI}}-1, X_{\mathrm{S} \Bigcdot}-3, k-3   )  \\
	={} & \frac{   \NPermuteR{k}{3} X_{\mathrm{SI}}  \NPermuteR{X_{\mathrm{SS}}}{2}  }{\NPermuteR{X_{\mathrm{S} \Bigcdot }}{3} }  \eqstop
\end{align*}
Similarly, for $i \in S_k$,
\begin{align*}
	\Eof{   \NPermuteR{X_{\mathrm{SI},i}}{2} X_{\mathrm{SS},i} \mid \history{t}  }  ={} & \frac{   \NPermuteR{k}{3}   \NPermuteR{X_{\mathrm{SI}}}{2} X_{\mathrm{SS}}  }{\NPermuteR{X_{\mathrm{S} \Bigcdot }}{3} } \eqcomma \\
	\Eof{  X_{\mathrm{SI},i} X_{\mathrm{SS},i}  \mid \history{t} } ={} & \frac{   \NPermuteR{k}{2} X_{\mathrm{SI}} X_{\mathrm{SS}}  }{\NPermuteR{X_{\mathrm{S} \Bigcdot }}{2} } \eqstop
\end{align*}

\section{Interpretation of the $\diffOperator{}{}$ operator}
\label{sec:DOperator}

Here, we provide an intuitive explanation for the $\diffOperator{}{}$ operator defined in Equation~\eqref{eq:diffOperatorDefn} in the context of SI process on CM random graphs. Recall that $\mu_{\mathrm{S}}$, and $  \mu_{\mathrm{S}}^{(r)} $ denote the average degree of a randomly chosen susceptible vertex, and the average excess degree of a  susceptible vertex randomly chosen as a neighbour of $r$ infected individuals, respectively. In Section~\ref{sec:LLN}, we mentioned that the operator $\diffOperator{\psi\circ \vartheta  }{r+1} $  recursively compared a susceptible vertex randomly chosen as a neighbour of $r$ infected individuals with a randomly chosen susceptible vertex. We make this notion of comparison precise.

\begin{myLemma}
	\label{thm:D_recurrence}
	Assume \ref{itm:as1}, \ref{itm:as2}, and  \ref{itm:as3} for the stochastic SI model on configuration model graph $\mathcal{G}(\psi, n)$. Then, $  \diffOperator{\psi\circ\theta  }{r} \ConvInProb  \diffOperator{\psi\circ\vartheta  }{r}  $ uniformly on $\setTime{0}$, and the following  recurrence relation for $\diffOperator{}{r}$ holds
	\begin{align}
		 \diffOperator{\psi \circ \theta   }{r+1} =  \frac{ \mu_{\mathrm{S}}^{(r)}  (\theta)  }{  \mu_{\mathrm{S}} (\theta) }  \diffOperator{\psi\circ \theta  }{r} \eqstop
	\end{align}
\end{myLemma}

\begin{proof}[Proof of Lemma~\ref{thm:D_recurrence}]
	The probability that a randomly chosen vertex $i$ is susceptible and is of degree~$k$ is given by $\probOf{ i \in S_k(t)  } = n^{-1} X_{\mathrm{S}} (0)  \theta^k  (t) p_k  $. The following is then immediate.
	\begin{align*}
		\mu_{\mathrm{S}} (\theta(t)  ) = \sum_k k \probOf{   i \in S_k(t)  \mid  i \in S(t)   } = \frac{ \sum_{k}  k  \theta^k  (t) p_k}{  \sum_{k}  \theta^k (t) p_k  } =\frac{ \theta (t)  \partial \psi(\theta (t) )  }{ \psi(\theta(t) )  } \eqstop
	\end{align*}
In order to explicitly calculate $  \mu_{\mathrm{S}}^{(r)} $, it will be helpful to keep the dynamic construction of the graph in mind. In particular, we make use the neighbourhood distribution of a susceptible vertex given in Lemma~\ref{remark:susceptibleNbd}. 
Therefore,
\begin{align*}
	  \mu_{\mathrm{S}}^{(r)}(\theta(t)  ) ={} &   \frac{  \sum_k (k-r) \probOf{   i \in S_k(t)  } \Eof{   \NPermuteR{X_{\mathrm{SI},i}}{r }   \mid \history{t-}    }   }{   \sum_k  \probOf{   i \in S_k(t)  }  \Eof{   \NPermuteR{X_{\mathrm{SI},i}}{r }   \mid \history{t-}    }    } \\
		={} &  \frac{  \sum_k \NPermuteR{k}{r+1}   \theta^k (t) p_k   }{   \sum_k   \NPermuteR{k}{r}   \theta^k (t) p_k     } \\
		={} & \frac{ \theta (t) \,  \partial^{r+1} \psi(\theta (t) )  }{ \partial^{r} \psi(\theta (t) ) } \eqstop
\end{align*}

The recurrence relation then follows in a straightforward manner.
\begin{align*}
	 \diffOperator{\psi\circ\theta  }{r+1} =   \frac{ \theta \,  \partial^{r+1} \psi(\theta  )  }{ \partial^{r} \psi(\theta  ) } \times \frac{  \psi(\theta)  }{   \theta\partial \psi(\theta )  }  \times   \frac{   \psi^{r-1} (\theta )   \partial ^r \psi(\theta ) }{ (\partial \psi(\theta )  )^r } = \frac{ \mu_{\mathrm{S}}^{(r)}  (\theta)  }{  \mu_{\mathrm{S}} (\theta) }  \diffOperator{\psi\circ\theta  }{r} \eqstop
\end{align*}
The convergence $  \diffOperator{\psi\circ \theta  }{r} \ConvInProb  \diffOperator{\psi\circ\vartheta  }{r}  $, uniformly on $\setTime{0}$, follows virtue of Theorem~\ref{thm:LLN}. This completes the proof.
\end{proof}

For our purposes, we only need $ \frac{ \mu_{\mathrm{S}}^{(1)}  (\theta)  }{  \mu_{\mathrm{S}} (\theta) }  \ConvInProb  \kappa(\vartheta) =   \diffOperator{\psi\circ \vartheta }{2}$, $ \frac{ \mu_{\mathrm{S}}^{(2)}  (\theta)  }{  \mu_{\mathrm{S}} (\theta) }  \kappa(\theta) \ConvInProb  \diffOperator{\psi\circ \vartheta  }{3}  $, and hence the interpretation  in Section~\ref{sec:LLN} as a limiting ratio follows. The two operators
$\diffOperator{\psi \circ\vartheta}{2}$, and $\diffOperator{\psi \circ\vartheta}{3}$ essentially allow us to correctly estimate various pair and triple counts in the large graph limit.

 \section{Rebolledo's theorem}
 \label{sec:rebolledo}
Here, we furnish a statement of the Rebolledo theorem as our derivation of the functional central limit theorem relies heavily on it. The version presented here differs slightly from the original one in \cite{rebolledo1980central}. In our application, we only need a central limit theorem for locally square integrable martingales.  Therefore, we borrow the following version from \cite{andersen1997statistical}.

Let $M_1, M_2, \ldots, M_n, \ldots$ be a sequence of vector-valued  locally square integrable martingales. We allow the possibility of they being defined on different sample spaces for each $n$. Let us denote the components of $M_n$ as $M_{n} \defeq (M_{n,1}, M_{n,2}, \ldots, M_{n,k})$ for some $k \in \setOfNaturals$. Now, for each $\epsilon >0$, define $ M_n^{(\epsilon)}$ to be a vector-valued locally square integrable martingale that contains all jumps of $ M_n$ larger in absolute value than $\epsilon$ as only jumps. Write $M_n^{(\epsilon)} \defeq (M_{n,1}^{(\epsilon)}, M_{n,2}^{(\epsilon)}, \ldots, M_{n,k}^{(\epsilon)}) $. Therefore, the absolute value of the difference between jump sizes of $M_{n,i}$ and $M_{n,i}^{(\epsilon)}$ is necessarily smaller than $\epsilon$.

The optional quadratic (co-) variation process $\optionalVariation{M_{n,i}, M_{n,j}}$ of the processes $M_{n,i}$ and $M_{n,j}$ is defined as the sum of the product of the jump sizes of the processes. That is, for $i,j =1, 2, \ldots, k$,  we define
\begin{align*}
	\optionalVariation{M_{n,i}, M_{n,j}} \defeq \sum_{s\le t}  \delta M_{n,i}(s) \delta M_{n,j}(s) \eqstop 
\end{align*}
By convention, we define $\optionalVariation{M_{n,i}}  = \optionalVariation{M_{n,i}, M_{n,j}} $.  For $i, j = 1, 2, \ldots, k$, the predictable quadratic (co-) variation process between $M_{n,i}$ and $M_{n,j}$ is given by
\begin{align}
\predictableVariation{M_{n,i},M_{n,j}}(t) &{} \defeq \int_0^t \Cov{\differential{ M_{n,i} (s) }, \differential{ M_{n,j}(s)} \mid \history{s-}  }  \differential{s} \eqcomma \nonumber
\end{align}
with the convention $\predictableVariation{M_{n,i}} = \predictableVariation{M_{n,i},M_{n,i}} $. Here, $\differential{M_{n,i}(s)}$ and $\differential{M_{n,j}(s)}$ are the increments of $M_{n,i}$, and $M_{n,j}$ respectively. Denote the corresponding $k\times k$ matrix-valued quadratic variation processes as $\predictableVariation{M_n} \defeq (( \predictableVariation{M_{n,i},M_{n,j}}  ))$. Similarly, for the  process  $M_n^{(\epsilon)}$, denote the corresponding predictable quadratic variation process as  $\predictableVariation{M_n^{(\epsilon)}} \defeq (( \predictableVariation{M_{n,i}^{(\epsilon)},M_{n,j}^{(\epsilon)}}  ))$.  Similarly, let $\optionalVariation{M_n}$ denote the matrix of optional quadratic variations of the components of $M_n$. That is, $\optionalVariation{M_n} \defeq (( \optionalVariation{M_{n,i},M_{n,j}}  ))$. Finally, let $ \setTime{0} \defeq [0,T]$ be as before and let $\setTime{}\subset \setTime{0}$.
%

\begin{myTheorem}[Rebolledo's theorem for locally square integrable martingales]
 Consider the following three conditions
\begin{align}
  \predictableVariation{M_n}(t) \ConvInProb V(t), \forall t \in \setTime{} \label{eq:rebolledo_pred} \eqcomma  \\
  \optionalVariation{M_n}(t) \ConvInProb V(t), \forall t \in \setTime{} \label{eq:rebolledo_opt}  \eqcomma \\
  \predictableVariation{M_{n,i}^{(\epsilon)}}(t) \ConvInProb 0, \forall t \in \setTime{},  \epsilon>0, i=1,2,\ldots,k \eqcomma   \label{eq:lindeberg}
\end{align}
as $n\rightarrow\infty$, where $V$ is a $k\times k$ semidefinite matrix-valued, continuous deterministic   function on $\setTime{0}$   with positive semidefinite increments and $V(0)=\boldsymbol{0}$, the zero matrix.

Then, either of Equation~\eqref{eq:rebolledo_pred} or Equation~\eqref{eq:rebolledo_opt} together with Equation~\eqref{eq:lindeberg} imply the following finite-dimensional convergence:
\begin{align}
(M_n(t_1),M_n(t_2),\ldots,M_n(t_l)) \ConvInDist (W(t_1),W(t_2),\ldots,W(t_l)) \eqcomma \nonumber
\end{align}
as $n\rightarrow\infty$, where $W$ is a Gaussian vector martingale with $\optionalVariation{W} = \predictableVariation{W} = V$ and $t_1, t_2, \ldots, t_l \in \setTime{}$; moreover, both Equation~\eqref{eq:rebolledo_pred} and Equation~\eqref{eq:rebolledo_opt} then hold.

If, in addition, $\setTime{} $ is dense in $\setTime{0}$, then either of Equation~\eqref{eq:rebolledo_pred} or Equation~\eqref{eq:rebolledo_opt} together with Equation~\eqref{eq:lindeberg} imply the following weak convergence:
\begin{align*}
  M_n \ConvInDist W
\end{align*}
as  $n\rightarrow\infty$  in   $D^{(k)}$, the 
 space of $\setOfReals^k$-valued \cadlag functions on $\setTime{0}$  endowed with the Skorohod topology, and $\predictableVariation{M_n}$ and $\optionalVariation{M_n}$ converge uniformly on compact subsets of $\setTime{0}$ to $V$ in probability.
\end{myTheorem}

\acks
\noindent First of all, the authors would like to thank the anonymous reviewers for their constructive feedback, which significantly improved the quality of the paper. Most of the work was conducted when the first author was a PhD student at the Technische Universit\"at Darmstadt in Germany. It was initiated when the first author was visiting the Mathematical Biosciences Institute (MBI) at The Ohio State University in Summer 2016.  The first author acknowledges the hospitality of  MBI during his visit to the institute.

\fund 
\noindent The work  has been co-funded by the German Research Foundation~(DFG) as part of project C3 within the Collaborative Research Center~(CRC) 1053 -- MAKI and the National Science Foundation under grants  RAPID DMS-1513489 and DMS-1853587. MBI is receiving major funding from the National Science Foundation under  the grant  DMS-1440386. The first author was supported by the President's Postdoctoral Scholars Program (PPSP) of the Ohio State University.

\competing 
\noindent There were no competing interests to declare which arose during the preparation or publication process of this article.

\data 
\noindent Not applicable.


\end{document}